\newtheorem{theorem}{Theorem}[section]
\newtheorem{lemma}[theorem]{Lemma}
\newtheorem{corollary}[theorem]{Corollary}
\newtheorem{proposition}[theorem]{Proposition}
\theoremstyle{definition}
\newtheorem{definition}[theorem]{Definition}
\newtheorem{remark}[theorem]{Remark}
\newtheorem{example}[theorem]{Example}
\newtheorem{notation}[theorem]{Notation}
\numberwithin{equation}{section}
\DeclareMathOperator{\End}{End}
\DeclareMathOperator{\Hom}{Hom}
\DeclareMathOperator{\Trans}{Trans}
\newcommand{\lace}{{\Sigma}}
\newcommand{\lil}{{\mathrm{Int}_\Lambda}}
\newcommand{\id}{\mathrm{id}}
\newcommand{\vect}{\mathrm{vect}}
\newcommand{\suchthat}{\ifnum\currentgrouptype=16 \;\middle|\;\else\mid\fi}
\let\save@mathaccent\mathaccent
\newcommand*\if@single[3]{%
  \setbox0\hbox{${\mathaccent"0362{#1}}^H$}%
  \setbox2\hbox{${\mathaccent"0362{\kern0pt#1}}^H$}%
  \ifdim\ht0=\ht2 #3\else #2\fi
  }
\newcommand*\rel@kern[1]{\kern#1\dimexpr\macc@kerna}
\newcommand*\widebar[1]{\@ifnextchar^{{\wide@bar{#1}{0}}}{\wide@bar{#1}{1}}}
\newcommand*\wide@bar[2]{\if@single{#1}{\wide@bar@{#1}{#2}{1}}{\wide@bar@{#1}{#2}{2}}}
\newcommand*\wide@bar@[3]{%
  \begingroup
  \def\mathaccent##1##2{%
    \let\mathaccent\save@mathaccent
    \if#32 \let\macc@nucleus\first@char \fi
    \setbox\z@\hbox{$\macc@style{\macc@nucleus}_{}$}%
    \setbox\tw@\hbox{$\macc@style{\macc@nucleus}{}_{}$}%
    \dimen@\wd\tw@
    \advance\dimen@-\wd\z@
    \divide\dimen@ 3
    \@tempdima\wd\tw@
    \advance\@tempdima-\scriptspace
    \divide\@tempdima 10
    \advance\dimen@-\@tempdima
    \ifdim\dimen@>\z@ \dimen@0pt\fi
    \rel@kern{0.6}\kern-\dimen@
    \if#31
      \overline{\rel@kern{-0.6}\kern\dimen@\macc@nucleus\rel@kern{0.4}\kern\dimen@}%
      \advance\dimen@0.4\dimexpr\macc@kerna
      \let\final@kern#2%
      \ifdim\dimen@<\z@ \let\final@kern1\fi
      \if\final@kern1 \kern-\dimen@\fi
    \else
      \overline{\rel@kern{-0.6}\kern\dimen@#1}%
    \fi
  }%
  \macc@depth\@ne
  \let\math@bgroup\@empty \let\math@egroup\macc@set@skewchar
  \mathsurround\z@ \frozen@everymath{\mathgroup\macc@group\relax}%
  \macc@set@skewchar\relax
  \let\mathaccentV\macc@nested@a
  \if#31
    \macc@nested@a\relax111{#1}%
  \else
    \def\gobble@till@marker##1\endmarker{}%
    \futurelet\first@char\gobble@till@marker#1\endmarker
    \ifcat\noexpand\first@char A\else
      \def\first@char{}%
    \fi
    \macc@nested@a\relax111{\first@char}%
  \fi
  \endgroup
}
\newcommand{\wbar}{\widebar}
\newcommand{\wtilde}{\widetilde}
\title{Interleavings and matchings as representations} 
\author[E.G.~Escolar]{Emerson G. Escolar}
\author[K.~Meehan]{Killian Meehan}
\author[M.~Yoshiwaki]{Michio Yoshiwaki}
\address[Emerson G. Escolar]{Center for Advanced Intelligence Project, RIKEN / Institute for Advanced Study, Kyoto University}
\email{emerson.escolar@riken.jp}
\address[Killian Meehan]{Institute for Advanced Study, Kyoto University}
\email{killian.f.meehan@gmail.com}
\address[Michio Yoshiwaki]{Center for Advanced Intelligence Project, RIKEN 
/ Institute for Advanced Study, Kyoto University
/ Osaka City University Advanced Mathematical Institute}
\email{michio.yoshiwaki@riken.jp}
\keywords{Interleavings, Persistence modules, Shoelace prosets}
\subjclass[2010]{16G20, 55N99}
\begin{document}
\begin{abstract}
  
  In order to better understand and to compare interleavings between persistence modules, we elaborate on the algebraic structure of interleavings in general settings.
  In particular, we provide a representation-theoretic framework for interleavings, showing that the category of interleavings under a fixed translation is isomorphic to the representation category of what we call a shoelace. Using our framework, we show that any two interleavings of the same pair of persistence modules are themselves interleaved. Furthermore, in the special case of persistence modules over $\mathbb{Z}$, we show that matchings between barcodes correspond to the interval-decomposable interleavings.
\end{abstract}

\maketitle

\section{Introduction}
\label{sec:introduction}

In recent years, the field of topological data analysis and, in particular, the use of  persistent homology \cite{edelsbrunner2000topological} have grown in popularity.
The algebraic structure of persistent homology can be expressed in the framework of persistence modules, which has led to many generalizations of the structures and methods of persistent homology.

One measure of the distance between two persistence modules is the so-called interleaving distance. The interleaving distance (in certain settings) is defined as the ``smallest'' translation at which interleaving morphisms exist; and interleaving morphisms express a kind of ``approximate'' isomorphism with respect to the corresponding translation. The viewpoint of this work is to treat the interleaving morphisms as objects of study in their own right.

Our work is in part motivated by the paper \cite{bauer2014induced}, in which an isometry theorem is proved between the interleaving distance on (pointwise finite dimensional) persistence modules over $\mathbb{R}$ and the bottleneck distance on the modules' corresponding barcodes. The bottleneck distance is defined by partial matchings of the elements of two barcodes. A partial matching always forms an interleaving of the original persistence modules, and is a ``diagonal'' interleaving between the interval summands. One of the primary goals of this work is to compare  interleavings\textemdash for example, general interleavings from ``diagonal'' interleavings\textemdash even in general settings.


In order to compare arbitrary interleavings, we reuse the concept of interleavings. That is, we define a notion of interleavings between interleavings. This is facilitated by our shoelacing operation, which allows us to realize interleavings as representations of a shoelace proset, on which interleavings can be easily defined. The shoelacing operation can be iterated and allows us to easily talk about interleavings of interleavings, and so on.

By establishing a relationship between the category of interleavings and a representation category (Theorem~\ref{thm:shoelace}), we are able to use known tools in representation theory to study interleavings.
Using this framework, one main result of our work is given in Theorem~\ref{thm:twist_interleaved_interleavings}, which states that any two interleavings (under a fixed translation) of the same pair of persistence modules are themselves interleaved via a translation that is canonically induced by the original translation. Furthermore, in the special case of persistence modules over $\mathbb{Z}$, we show that matchings between barcodes correspond to a special class of interleavings, called interval-decomposable interleavings (Theorem~\ref{thm:corr}). 


In Section~\ref{sec:background}, we review some background definitions that we need. In Section~\ref{sec:shoelaces}, we present our framework of the shoelace proset, which serves as the foundation for
Theorem~\ref{thm:shoelace} stating that interleavings over a fixed translation are essentially representations of the shoelace proset.
In Section~\ref{sec:iterated_shoelaces}, we discuss iterating the shoelacing operation, and prove our main theorem, Theorem~\ref{thm:twist_interleaved_interleavings}. Finally, in Section~\ref{sec:diagonal}, we specialize to the case of the poset $\mathbb{Z}$, and discuss interval-decomposable interleavings and matchings.

We note that the category of $\epsilon$-interleavings (between persistence modules over $\mathbb{R}$) has been defined in \cite{bubenik2014categorification}, where it was directly verified to be abelian.
We contrast this with Theorem~\ref{thm:shoelace}, which implies without extra work that the category of $\epsilon$-interleavings (and more generally, our setting of $\Lambda$-interleavings between $D$-valued persistence modules, Definition~\ref{defn:intcat}) forms an abelian category if $D$ is abelian.
Furthermore, since our result explicitly expresses the category of $\epsilon$-interleavings as a representation category, 
we can utilize the language of representation theory.



\section{Background}
\label{sec:background}
We use the language of category theory in order to express our results. 
For a review of category theory, \cite{kelly1982basic,mac2013categories} are helpful. We adopt the notation and setting of \cite{bubenik2015metrics}.

Recall that a \emph{preordered set} (\emph{proset}) $(P,\leq)$ is a
set $P$ together with relation $\leq$ such that
\begin{itemize}
\item $x \leq x$ for all $x\in P$ (reflexivity), and
\item $x\leq y$ and $y\leq z$ implies $x\leq z$ (transitivity).
\end{itemize}
In what follows, we will simply write $P$ for the proset $(P,\leq)$ where the preorder is understood. 

When talking about a category $C$, we will use the notation $x\in C$ to mean an object of the category.
A proset $(P,\leq)$ can be viewed as a category with objects $x\in P$ and for any objects $x,y$
a unique morphism $x \rightarrow y$ if and only if $x\leq y$, with obvious composition of morphisms and the identity morphism $\id_x=(x \leq x)$ for each $x\in P$. Throughout this work, we view prosets as categories. Furthermore, the unique morphism $x$ to $y$ whenever $x\leq y$ will itself be denoted by $x\leq y$ or $y\geq x$
where convenient. Note that the latter notation enables us to write compositions in a more natural way: $y\leq z$ composed with $x \leq y$ (going from $x$ to $y$ and then from $y$ to $z$) can be written as
\[
  (z \geq y)(y \geq x) = (z \geq x)
\]
since composition of morphisms is usually written ``right-to-left''.

In general, two objects $x$ and $y$ in a category $C$ are said to be \emph{isomorphic} if there exist mutually inverse morphisms.
In the particular case of a proset $P$, this is equivalent to the existence of morphisms
$x\leq y$ and $y\leq x$. Here, the compositions
\[
  (y\geq x)(x\geq y) = (y \geq y) = \id_y
  \text{ and }
  (x\geq y)(y\geq x) = (x \geq x) = \id_x
\]
are automatically the respective identities, and we say that $x$ and $y$ are isomorphic and write $x \cong y$.
Note that a proset need not satisfy the antisymmetry condition, so that $x \leq y$ and $y\leq x$ may hold even if $x \neq y$.

\begin{definition}[Representation of a proset]
  A \emph{representation} \(M\) of a proset \(P\) with
  values in a category \(D\) is a functor \(M:P\to D\).
  A morphism $\eta: M \rightarrow N$ between two representations $M,N:P\rightarrow D$ is a natural transformation.
  The representations \(P\to D\) together with these morphisms and the obvious composition form  
  the category of $D$-valued representations of $P$, denoted \(D^P\).
\end{definition}

To unpack the above definitions, we note that a representation \(M \in D^P\) consists of the following data:
\begin{itemize}
\item an assignment of an object \(M(x)\in D\) to every \(x\in P\), and
\item an assignment of a morphism \(M(x\leq y):M(x)\to M(y)\) to every \(x\leq y\) in \(P\)
\end{itemize}
such that
\begin{enumerate}
\item $M(x\leq x) = \id_{M(x)}$ for all $x\in P$,
\item $M(z \geq y) M(y \geq x) = M(z \geq x)$ whenever $x\leq y$ and $y\leq z$.
\end{enumerate}
A morphism $\eta: M\rightarrow N$ in $D^P$ is a natural
transformation. In particular $\eta$ is a collection \(\{\eta(x):M(x)\to N(x)\}_{x\in P}\)
such that for all \(x\leq y\) in \(P\) the
following diagram commutes:
\begin{equation}
  \label{eq:naturality}
  \begin{tikzpicture}
    \node (mx) at (0,1.5) {$M(x)$};
    \node (my) at (3,1.5) {$M(y)$};
    \node (nx) at (0,0) {$N(x)$};
    \node (ny) at (3,0) {$N(y)\mathrlap{.}$};

    \draw[->] (mx) -- node[midway,above]{$M(x\leq y)$} (my);
    \draw[->] (mx) -- node[midway,left]{$\eta(x)$} (nx);
    \draw[->] (my) -- node[midway,right]{$\eta(y)$} (ny);
    \draw[->] (nx) -- node[midway,below]{$N(x\leq y)$} (ny);
  \end{tikzpicture}
\end{equation}

  We note that in the persistence literature, representations of a
  proset \(P\) are also called \emph{generalized persistence modules}
  over \(P\). In this work, we will simply use the terms \emph{representation} or \emph{functor}.  

\begin{definition}[Translation]
  Let $P$ be a proset.
  \begin{enumerate}
  \item A \emph{translation} of $P$ is a functor $\Lambda:P\rightarrow P$ such that $x \leq \Lambda(x)$ for each $x\in P$. Note that since $\Lambda$ is a functor, if $x\leq y$  
  then $\Lambda(x) \leq \Lambda(y)$.
  \item The \emph{natural transformation $\eta_\Lambda$ of a translation $\Lambda$} of $P$ is the morphism $\eta_\Lambda:1_P\rightarrow \Lambda$ whose morphism at each object $x\in P$ is  $\eta_\Lambda(x) = (x \leq \Lambda(x))$.
  \end{enumerate}
\end{definition}
Given a translation $\Lambda$ of $P$, $\eta_\Lambda$ is well-defined and unique.
For naturality, we need to check the commutativity of Diagram~\eqref{eq:naturality} with $\eta=\eta_\Lambda$, $M = 1_P$, and $N = \Lambda$. These substitutions result in a diagram with terms in $D=P$,
whose commutativity follows automatically from the Thin Lemma below, since $P$ is thin. Recall that a category is said to be thin if there is at most one morphism between any two objects.
\begin{lemma}[{Thin Lemma, \cite[Lemma 3.1]{bubenik2015metrics}}]
In a thin category all diagrams commute, since there
is at most one morphism between any two objects.
\end{lemma}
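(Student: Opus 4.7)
The plan is to unpack the definition of a commuting diagram and let the thinness hypothesis do the rest. Recall that a diagram in a category $C$ is said to commute when, for every pair of objects $x,y$ appearing in the diagram and for every pair of directed paths from $x$ to $y$ built out of the arrows of the diagram, the two morphisms obtained by composing along each path agree in $C$.

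To prove the lemma, I would fix an arbitrary diagram in a thin category $C$, pick any two objects $x,y$ appearing in that diagram, and consider any two parallel directed paths from $x$ to $y$. Composing the arrows along each path produces morphisms $f,g \in \Hom_C(x,y)$. By the thinness hypothesis, $\Hom_C(x,y)$ contains at most one element, and so $f = g$.

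Since this argument applies to every choice of $x,y$ and every pair of parallel paths in the diagram, all diagrams in $C$ commute. There is no genuine obstacle: the lemma is essentially a direct rephrasing of the thinness assumption, and the proof is exactly the one-line reasoning already indicated parenthetically in the statement.
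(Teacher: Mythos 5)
Your proof is correct and is essentially the same one-line argument the paper intends (and which \cite{bubenik2015metrics} gives): two parallel composites land in the same hom-set, and thinness forces them to coincide. The paper itself offers no proof beyond the parenthetical remark and the citation, so there is nothing further to compare.
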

Note that $D^P$ is thin if $D$ is thin.
In general, we do not assume that $D$ is thin, and so diagrams involving representations
in $D^P$ should not be taken to be automatically commutative.

\begin{definition}
  Let $(P,\leq)$ be a proset. The set $\Trans(P)$ of all translations of $P$ can be given a preorder $\leq$ where   
  $\Lambda \leq \Gamma$ if and only if $\Lambda(i) \leq \Gamma(i)$ for all $i\in P$. 
\end{definition}

Given $M\in D^P$ and a translation $\Lambda$ of $P$, we have a translated representation $M\Lambda \doteq M \circ \Lambda$ by composition. We take note of two types of compositions involving natural transformations and functors below. These compositions are used in the definition of interleavings, so we write them down explicitly.
\begin{definition}
  Let $P$ be a proset, $\phi: M \rightarrow N \in D^P$ be a morphism of representations $M$ and $N$, and let $\Lambda$ be a translation of $P$.
  \begin{enumerate}
  \item Define the morphism
    $M\eta_\Lambda: M \to M\Lambda$ to be the one given by $(M\eta_\Lambda)(x)=M(\eta_\Lambda(x)) = M(x\leq\Lambda(x))$ for all $x\in P$.
  \item Define the morphism
    $\phi\Lambda : M\Lambda \rightarrow N\Lambda$ to be the one given by
    $(\phi\Lambda)(x) = \phi(\Lambda(x))$ for all $x \in P$.
  \end{enumerate}
\end{definition}

We can finally define what we mean by interleavings of representations.
\begin{definition}
  Let $\Lambda$ be a translation of $P$.
  A \(\Lambda\)\emph{-interleaving} of \(M,N\in D^P\)
  is a pair of morphisms
  \(\phi:M\to N\Lambda\), \(\psi:N\to M\Lambda\)
  such that the following diagrams commute:
  \begin{equation}
    \label{eq:interleaving_morphisms}
    \begin{tikzcd}
      M \ar[rr]{}{M\eta_{\Lambda\Lambda}}\ar[dr,swap]{}{\phi} & & M\Lambda\Lambda \\
      & N\Lambda\ar[ur,swap]{}{\psi\Lambda} &
    \end{tikzcd}
    \text{ and }
    \begin{tikzcd}
      & M\Lambda\ar[dr]{}{\phi\Lambda} & \\
      N \ar[rr,swap]{}{N\eta_{\Lambda\Lambda}} \ar[ur]{}{\psi} & & N\Lambda\Lambda \mathrlap{.}
    \end{tikzcd}
  \end{equation}
  Such a pair $(\phi,\psi)$ is called a pair of \emph{$\Lambda$-interleaving morphisms} from $M$ to $N$.
\end{definition}

We note that interleavings can be defined with respect to \emph{two} translations $\Lambda$ and $\Gamma$, giving so-called $(\Lambda,\Gamma)$-interleavings.
For simplicity, we do not treat this generality here.
Next, we give the following definition of the category of
$\Lambda$-interleavings which generalizes the definition of the category of $\epsilon$-interleavings \cite{bubenik2014categorification}, which appear when
$P = (\mathbb{R},\leq)$ and $\Lambda(x) = x+\epsilon$ for some $\epsilon \in \mathbb{R}$.
\begin{definition}
  The category of \(\Lambda\)-interleavings of \(D^P\), denoted
  \(\lil(P,D)\), is the category with objects, morphisms, and composition given as follows.
  \begin{enumerate}
  \item Objects are \(4\)-tuples \((M,N,\phi,\psi)\) with $M,N \in D^P$ and \(\phi:M\to N\Lambda\) and \(\psi:N\to M\Lambda\) is a pair of $\Lambda$-interleaving morphisms.  
    
  \item A morphism from \((M,N,\phi,\psi)\) to \((M',N',\phi',\psi')\) is a pair
    \(g_M:M\to M',g_N:N\to N'\) such that the following diagrams commute:
    \begin{equation}
      \label{eq:morphisms_of_interleavings}
      \begin{tikzcd}
        M \rar{\phi} \dar{g_M} & N\Lambda \dar{g_N\Lambda} \\
        M' \rar{\phi'} & N'\Lambda
      \end{tikzcd}
      \text{ and }
      \begin{tikzcd}
        N \rar{\psi} \dar{g_N} & M\Lambda \dar{g_M\Lambda} \\
        N' \rar{\psi'} & M'\Lambda \mathrlap{.}
      \end{tikzcd}
    \end{equation}
  \item Composition of morphisms is component-wise.    
  \end{enumerate}
  \label{defn:intcat}
\end{definition}


\section{Shoelaces}
\label{sec:shoelaces}

Motivated by interleavings, we introduce the following construction. 
\begin{definition}[Shoelace proset]
  \label{defn:shoelace}
  For a proset \(P\) and a translation \(\Lambda\) on \(P\), define the proset
  \[
    \lace_\Lambda P = P \sqcup P' \doteq \{i\}_{i\in P}\sqcup\{i'\}_{i\in P},
  \] with relation \(x \leq y \)
  defined for $x,y \in P \sqcup P'$ by considering cases for the ordered pair $(x,y)$:
  \begin{enumerate}
  \item Suppose that $(x,y) = (i,j) \in P\times P$ or $(x,y) = (i', j') \in P'\times P'$. If $i \leq j$ in $P$ then $x \leq y$ in $\lace_\Lambda P$.
  \item Suppose that $(x,y) = (i,j') \in P\times P'$ or $(x,y) = (i',j) \in P'\times P$. If $\Lambda i \leq j$ in $P$, then  $x \leq y$ in $\lace_\Lambda P$.  
  \end{enumerate}
  We call this the \emph{$\Lambda$-shoelace proset} of $P$
\end{definition}

That is, we construct $\lace_\Lambda P$ from
two copies of $P$: $P$ and $P'$. The preorder of $\lace_\Lambda P$ restricted to each copy is the same as that of $P$ itself.
To ``go across'' from $P$ to $P'$ or vice-versa, say from element $i$ to $j$ (one of which is in $P$ and the other in $P'$), $i$ and $j$ must satisfy $j \geq \Lambda i$ in $P$.

\begin{notation}
  As a notational shorthand, for $i\in P$ we  
  write $i\in \lace_\Lambda P$ and $i' \in \lace_\Lambda P$ for the two corresponding elements (copies in $\lace_\Lambda P$). Conversely, for $x \in \lace_\Lambda P$, there exists a unique $i\in P$ such that $x = i$ or $x=i'$.
\end{notation}

\begin{proposition}
  Definition~\ref{defn:shoelace} indeed defines a proset $\lace_\Lambda P$.
\end{proposition}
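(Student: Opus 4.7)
My plan is to verify the two proset axioms, reflexivity and transitivity, directly from the case-based definition of $\leq$ on $\lace_\Lambda P$.

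For reflexivity, let $x \in \lace_\Lambda P$. By the notation, $x$ equals either $i$ or $i'$ for a unique $i \in P$; in either situation the pair $(x,x)$ falls under case (1) of Definition~\ref{defn:shoelace}, and since $i \leq i$ in $P$ we conclude $x \leq x$.

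For transitivity, assume $x \leq y$ and $y \leq z$ in $\lace_\Lambda P$. Writing each of $x,y,z$ as the element $i,j,k$ of $P$ either primed or unprimed, there are $2^3 = 8$ cases according to the three-bit string of primes. I will work through them using three ingredients: (i) transitivity of $\leq$ in $P$, (ii) functoriality of $\Lambda$, which gives $a \leq b \Rightarrow \Lambda a \leq \Lambda b$, and (iii) the translation property $a \leq \Lambda a$. In each case, the two hypotheses translate (depending on which cases of the definition apply) into one or two inequalities in $P$ involving possibly one application of $\Lambda$, and I must derive the correct conclusion in $P$ for the target case.

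The routine cases are the two in which all of $x,y,z$ lie in the same copy, where transitivity in $P$ is the only tool needed, and the four ``single crossing'' cases, where exactly one hypothesis uses case (2); these follow by applying the functoriality of $\Lambda$ to push it past a non-$\Lambda$ inequality. The step that I expect to be the main obstacle is the two ``double crossing'' cases, namely $x = i$, $y = j'$, $z = k$ and $x = i'$, $y = j$, $z = k'$, where both hypotheses use case (2) but the conclusion needs case (1); here one has $\Lambda i \leq j$ and $\Lambda j \leq k$, from which the translation property $j \leq \Lambda j$ and $i \leq \Lambda i$ must be invoked to collapse the two $\Lambda$'s and obtain $i \leq k$ by chaining $i \leq \Lambda i \leq j \leq \Lambda j \leq k$. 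Once all eight cases are dispatched in this style, both axioms are established and $\lace_\Lambda P$ is a proset.
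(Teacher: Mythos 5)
Your proof is correct and follows essentially the same approach as the paper: reflexivity is immediate from case (1) of the definition, and transitivity is verified by an eight-way case split on which copies $x,y,z$ lie in, with the ``double crossing'' case resolved by invoking the translation property $a \leq \Lambda a$. The only cosmetic difference is in that hardest case, where you chain $i \leq \Lambda i \leq j \leq \Lambda j \leq k$ directly, while the paper instead applies $\Lambda$ once more to get $\Lambda\Lambda i \leq \Lambda j \leq k$ and then uses $i \leq \Lambda\Lambda i$; both are valid and yours is arguably a touch cleaner.
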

\begin{proof}
  For \(i\in\lace_\Lambda P\), \(i\leq i\) in \(\lace_\Lambda P\) as
  \(i\leq i\) in \(P\). Similarly, \(i'\leq i'\) for any
  \(i'\in\lace_\Lambda P\).

  We split the check for transitivity into multiple cases and take advantage of symmetries.
  \begin{itemize}
  \item If $i\leq j\leq k$ in $\lace_\Lambda P$ (similarly, $i'\leq j'\leq k'$),
    then $i \leq k$ by transitivity in $P$.    
  \item Suppose $i\leq j\leq k'$ (similarly, $i'\leq j'\leq k$).
    Then $\Lambda i\leq \Lambda j\leq k$, and so $i\leq k'$.
  \item Next, suppose $i\leq j'\leq k$ (similarly, $i'\leq j\leq k'$).
    Then $\Lambda\Lambda i\leq \Lambda j\leq k$ in $P$. Furthermore, $i \leq \Lambda i \leq \Lambda\Lambda i$ by definition. Combining, we conclude that $i\leq k$ since $\leq$ is transitive in $P$.
  \item Finally, suppose that  $i'\leq j\leq k$ (similarly, $i'\leq j'\leq k$).
    Then $\Lambda i\leq j\leq k$, and so $i'\leq k$.
  \end{itemize}
\end{proof}

\begin{example}
  Let $P = \{1,2,3\}$ with $1 \leq 2 \leq 3$. Let $\Lambda:P\rightarrow P$ be defined by $\Lambda(i) = \min(i+1,3)$. Then $\lace_\Lambda P$ can be visualized by a Hasse diagram:
  \[
    \begin{tikzcd}
      3 \rar[bend left=15]{} & 3' \lar[bend left=15]\\
      2 \ar[ur]{}{} \uar & 2' \ar[ul]{}{} \uar \\
      1 \ar[ur]{}{} \uar & 1' \mathrlap{.} \ar[ul]{}{} \uar
    \end{tikzcd}
  \]
  Note that while $P$ is a poset (antisymmetry holds), the same is not true for $\lace_\Lambda P$, since $3 \leq 3'$ and $3' \leq 3$ but $3\neq 3'$.  
\end{example}

We note that the above phenomenon is general. Let $i \in P$ be maximal, and $\Lambda$ a translation of $P$. Since $i \leq \Lambda(i)$, we have $i = \Lambda(i)$ and thus $i \leq i'$ and $i \geq i'$ in $\lace_\Lambda P$. Even more generally, we have the following.
\begin{remark}
  Let $P$ be a proset, and $\Lambda$ be a translation. Let $i \in P$. 
  The following are equivalent:
  \begin{enumerate}
  \item $i \cong \Lambda(i)$ in $P$.
  \item $i \cong i'$ in $\lace_\Lambda P$.
  \end{enumerate}
\end{remark}
\begin{proof}
  First, we note that $i \leq \Lambda(i)$ for all $i\in P$. The claim then follows from the fact that $\Lambda(i) \leq i$ in $P$ is equivalent to $i \leq i'$ and $i' \leq i$ in $\lace_\Lambda P$ by case (2) of Definition~\ref{defn:shoelace}.
\end{proof}
In other words, isomorphisms across the two copies of $P$ in $\lace_\Lambda P$  correspond to up-to-isomorphism-fixed-points of $\Lambda$.
To continue our shoelacing analogy, we tie together both sides at any point that does not translate ``upwards''.

Note that $\lace_\Lambda P$ is another proset, and we have the $D$-valued representation category $D^{\lace_\Lambda P}$. The next theorem states that the $D$-valued representation category of the $\Lambda$-shoelace of $P$ is isomorphic to category of $\Lambda$-interleavings of $D^P$.

\begin{theorem}
  \label{thm:shoelace}
  Let $P$ be a proset, and $\Lambda$ a translation of $P$. Then
  \[
    D^{\lace_\Lambda P} \cong \lil(P,D).
  \]
\end{theorem}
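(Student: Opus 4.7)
The plan is to construct a pair of mutually inverse functors $F\colon D^{\lace_\Lambda P}\to \lil(P,D)$ and $G\colon \lil(P,D)\to D^{\lace_\Lambda P}$ witnessing the isomorphism of categories. The key observation is that a representation of $\lace_\Lambda P$ naturally consists of two representations of $P$ (one on each copy) glued together by morphisms across the crossings, and the crossing arrows are precisely what encode the interleaving data.

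For $F$, given $R\in D^{\lace_\Lambda P}$, I would set $M(i):=R(i)$ and $N(i):=R(i')$ for each $i\in P$, with internal morphisms $M(i\leq j):=R(i\leq j)$ and $N(i\leq j):=R(i'\leq j')$. To recover the interleaving morphisms, I use the crossing arrows: since $\Lambda i\leq \Lambda i$ holds in $P$, case (2) of Definition~\ref{defn:shoelace} gives arrows $i\leq (\Lambda i)'$ and $i'\leq \Lambda i$ in $\lace_\Lambda P$, and I define $\phi(i):=R(i\leq (\Lambda i)')$ and $\psi(i):=R(i'\leq \Lambda i)$. Naturality of $\phi$ and $\psi$ is essentially automatic: in the thin proset $\lace_\Lambda P$ any two composites sharing a source and target are equal as morphisms, so applying the functor $R$ produces the naturality square. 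The interleaving diagrams in \eqref{eq:interleaving_morphisms} reduce to the identity $R((\Lambda i)'\leq \Lambda\Lambda i)\cdot R(i\leq (\Lambda i)')=R(i\leq \Lambda\Lambda i)$, which is again just functoriality of $R$.

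For $G$, given an object $(M,N,\phi,\psi)$ of $\lil(P,D)$, I would define $R(i):=M(i)$ and $R(i'):=N(i)$, with $R(i\leq j):=M(i\leq j)$ and $R(i'\leq j'):=N(i\leq j)$. For the crossings, whenever $\Lambda i\leq j$ in $P$, I set
\[
R(i\leq j'):=N(\Lambda i\leq j)\,\phi(i), \qquad R(i'\leq j):=M(\Lambda i\leq j)\,\psi(i).
\]
Preservation of identities is immediate, and the compositions to check decompose into cases according to how a triple $(x,y,z)$ is distributed between $P$ and $P'$. Same-side cases follow from functoriality of $M$ or $N$; one-crossing cases reduce to naturality of $\phi$ or $\psi$; and two-crossing cases (for instance $i\leq j'\leq k$) are where the interleaving conditions enter: after using naturality of $\psi$ to pull the two crossings adjacent, the composite $\psi(\Lambda i)\,\phi(i)$ is replaced by $M(i\leq \Lambda\Lambda i)$ using the commutativity of \eqref{eq:interleaving_morphisms}, and functoriality of $M$ collapses the result to $M(i\leq k)$.

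On morphisms, the assignments are dictated by the object-level correspondence: a morphism $(g_M,g_N)$ in $\lil(P,D)$ assembles into a natural transformation $R\to R'$ whose components on $P$ and $P'$ are $g_M$ and $g_N$, and the naturality squares on the crossings are exactly~\eqref{eq:morphisms_of_interleavings}. Conversely, a natural transformation between shoelace representations restricts to compatible transformations on each side. Verifying $F\circ G=\id$ and $G\circ F=\id$ is then a matter of unwinding definitions. The main obstacle I anticipate is the functoriality check for $G$ on two-crossing compositions; this is the only place where the interleaving axioms are genuinely used, and it requires combining naturality of $\phi,\psi$ with the triangles \eqref{eq:interleaving_morphisms} in the correct order.
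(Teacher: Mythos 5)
Your proposal is correct and follows essentially the same route as the paper: you define the same two constructions (the paper calls your $F$ its $R$ and your $G$ its $F$), use the thinness of $\lace_\Lambda P$ for naturality of $\phi,\psi$, and correctly locate the two-crossing cases as the unique place where the interleaving triangles are genuinely needed. The only difference from the paper is naming and the order in which the two mutually inverse functors are presented.
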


\begin{proof}  
  We define an isomorphism \(F:\lil(P,D)\to D^{\lace_\Lambda P}\).   
  
  \paragraph{\textbf{On objects}}  
  We define $F$ to send an object \((M,M',\phi,\psi)\) of $\lil(P,D)$ to the representation \(V\) of
  \(\lace_\Lambda P\), defined as follows:  
  \begin{itemize}    
  \item \(V(i)=M(i)\) and \(V(i')=M'(i)\) for $i,i' \in \lace_\Lambda P$, 
  \item \(V(j\geq i) = M(j\geq i)\) and  \(V(j'\geq i') = M'(j\geq i)\) for all $j \geq i$ in $P$, and
  \item \(V(j'\geq i) = M'(j \geq \Lambda i) \phi(i)\) and \(V(j \geq i') = M(j \geq \Lambda i)\psi(i)\) whenever $j \geq \Lambda i$ in $P$.
  \end{itemize}
  We illustrate $V(j' \geq i)$ and $V(j \geq i')$ as  
  \[
    \begin{tikzcd}
      & M'(j) \\
      & M'(\Lambda i) \ar{u}[swap]{M'(j\geq \Lambda i)} \\
      M(i) \ar{ur}[swap]{\phi(i)} &
    \end{tikzcd}
    \text{ and }
    \begin{tikzcd}
        M(j) & \\
        M(\Lambda i) \uar{M(j \geq \Lambda i)} &  \\
        & M'(i) \ar{ul}{\psi(i)}
      \end{tikzcd}
    \]
  respectively.
  Intuitively, $V$ takes on the values of $M$ and $M'$ respectively on the copies $P$ and $P'$ in $\lace_\Lambda P$, and uses $\phi$ and $\psi$ to ``jump across''.
  
  We check that \(V\) as defined above is indeed a functor (i.e. an element of $D^{\lace_\Lambda P}$.)
  To do so, we first check that $V(fg) = V(f)V(g)$ for $f,g$ morphisms in $\lace_\Lambda P$. We organize our proof by whether or not the morphisms $f$ and $g$ go  
  across (\textbf{swap}) the copies $P$ and $P'$ in $\lace_\Lambda P$ or not (\textbf{stay}).  
  
  \begin{itemize}
  \item \textbf{Stay and stay.} For \(k\geq j\geq i\) in \(\lace_\Lambda P\) (similarly for \(i' \leq j' \leq k' \in \lace_\Lambda P\), but with $M'$ instead of $M$):
    \[
      V(k\geq j) V(j\geq i) = M(k\geq j) M(j\geq i) = M((k\geq j)(j\geq i)) = V((k\geq j)(j\geq i)).
    \]    

  \item \textbf{Swap and swap.} Suppose that \(k\geq j'\geq i\) in \(\lace_\Lambda P\). Then  
  by definition of $\lace_\Lambda P$, $k \geq \Lambda j$ and $j \geq \Lambda i$ in $P$. Since $\Lambda$ is a functor, $\Lambda j \geq \Lambda\Lambda i$. We have
    \[
      \begin{array}{rcl}
        V(k \geq j') V(j' \geq i) &=& [M(k \geq \Lambda j)\psi(j)]  [M'(j \geq \Lambda i)\phi(i)] \\
                                  &=& M(k \geq \Lambda j) M(\Lambda j \geq \Lambda\Lambda i)
                                      \psi({\Lambda i}) \phi(i) \\
                                  &=& M(k \geq \Lambda j) M(\Lambda j \geq \Lambda\Lambda i)
                                      M(\Lambda\Lambda i \geq i) \\
                                  & = & M(k \geq i) \\
                                  & = & V(k \geq i) = V((k\geq j')(j' \geq i)).
      \end{array}
    \]
    This can be understood more readily by the following commutative diagram    
    \[
      \begin{tikzcd}[row sep=small]
        M(k) & \\
        M(\Lambda j) \uar{M(k \geq \Lambda j)} &  \\
        & M'(j) \ar{ul}{\psi(j)} \\
        M(\Lambda\Lambda i) \ar{uu}{(M\Lambda)(j\geq \Lambda i) =  M(\Lambda j \geq \Lambda\Lambda i)} & \\
        & M'(\Lambda i)\mathrlap{,} \ar{uu}[swap]{M'(j\geq \Lambda i)} \ar{ul}{\psi({\Lambda i})} \\
        M(i) \ar{uu}{M\eta_{\Lambda\Lambda}(i) = M(\Lambda\Lambda i \geq i)} \ar{ur}[swap]{\phi(i)} & 
      \end{tikzcd}
    \]
    where the commutativity of the square and triangle (second and third equalities above, respectively) follow from the naturality of $\psi$ and the definition of interleavings.
    
    For \(k'\geq j\geq i'\), the proof is symmetric to the above case.
    
  \item \textbf{Swap then stay.}
    Let \(k \geq j\geq i'\) in $\lace_\Lambda P$. Then, $j \geq \Lambda i$ in $P$, and we have
    \begin{align*}
      V(k \geq i')& = M(k \geq \Lambda i)\psi(i) \\
                  & = M(k \geq j) M(j \geq \Lambda i)\psi(i) \\
                  & = V(k \geq j) V(j \geq i').
    \end{align*}    
    
    The case \(k' \geq j' \geq i\) is similar.
    
  \item \textbf{Stay then swap.} Let \(k'\geq j\geq i\) in $\lace_\Lambda P$. Then  
  $k \geq \Lambda j \geq \Lambda i$ in $P$, and
    \begin{align*}
      V(k'\geq i) & = M'(k \geq \Lambda i)\phi(i) \\
                  & = M'(k \geq \Lambda j) M'(\Lambda j\geq \Lambda i) \phi(i) \\
                  & = M'(k \geq \Lambda j) \phi(j) M(j \geq i) \\
                  & = V(k' \geq j) V(j\geq i)
    \end{align*}
    where the third equality follows from the naturality of $\phi: M\rightarrow M'\Lambda$. We illustrate this computation as the commutative diagram        
    \[
      \begin{tikzcd}[row sep=small]
        & M'(k) \\
        & M'(\Lambda j) \uar[swap]{M'(k\geq \Lambda j)} \\
        M(j) \ar{ur}{\phi(j)} \\
        & M'(\Lambda i)\mathrlap{,} \ar{uu}[swap]{M'(\Lambda j \geq \Lambda i) = (M'\Lambda)(j \geq i)} \\
        M(i) \ar{uu}{M(j \geq i)} \ar{ur}[swap]{\phi(i)}        
      \end{tikzcd}
    \]    
    For \(i'\leq j'\leq k\), the proof is symmetric to the one above.    
  \end{itemize}
  
  There are only $8 = 2^3$ ways to choose unprimed and primed versions of the variables in the inequality $z\geq y\geq x$, and so we have covered all cases. Finally, it is trivial to check that $V(i\geq i) = \id_{V(i)}$ and $V(i' \geq i') = \id_{V(i')}$. Thus, $V$ is indeed a functor.
  
  \paragraph{\textbf{On morphisms}}
  Let $(g_1,g_2) : (M,M',\phi,\psi) \rightarrow (N,N',\bar\phi,\bar\psi)$ be a morphism in $\lil(P,D)$ and let  
  $V_M = F(M,M',\phi,\psi)$
  and
  $V_N = F(N,N',\bar\phi,\bar\psi)$.
  We define $F(g_1,g_2) = g : V_M \rightarrow V_N$ by
  \begin{itemize}
  \item \(g(i):V_M(i)\to V_N(i)\) is the morphism \(g_1(i):M(i)\to N(i)\).
  \item \(g(i'):V_M(i')\to V_N(i')\) is the morphism \(g_2(i):M'(i)\to  N'(i)\).
  \end{itemize}
  for each $i,i' \in \lace_\Lambda P$. In essence, we ``combine'' the two natural transformations $g_1:M\rightarrow N$ and $g_2:M'\rightarrow N'$ into one, so that $g$ restricted to $P$ is $g_1$ and restricted to $P'$ is $g_2$.
  
  We check that, combined this way, $g$ is indeed a natural transformation. That is, for $y \geq x$ in $\lace_\Lambda P$, we have the commutativity of
  \[
    \begin{tikzcd}
      V_M(y) \rar{g(y)} &  V_N(y) \\
      V_M(x) \rar{g(x)}\uar{V_M(y\geq x)} & V_N(x)\mathrlap{.} \uar[swap]{V_N(y \geq x)}
    \end{tikzcd}
  \]
  Restricted to $P$ or $P'$ (that is, $(x,y) = (i,j)$ or $(x,y) = (i',j')$) naturality of $g$  
  follows immediately from that of $g_1$ or $g_2$, respectively.

  Suppose $y = j' \geq i =x$ in $\lace_\Lambda P$. Then $j\geq \Lambda i$ in $P$ and we compute
  \begin{align*}
    V_N(j'\geq i)g(i) & = N'(j \geq \Lambda i) \bar\phi(i) g_1(i) \\
                      & = N'(j \geq \Lambda i)  g_2(\Lambda i) \phi(i) \\
                      & = g_2(j)  M'(j \geq \Lambda i) \phi(i) \\
                      & = g(j') V_M(j' \geq i').
  \end{align*}
  This follows from the commutativity of
  \[
    \begin{tikzcd}
      M'(j) \rar{g_2(j)} & N'(j) \\
      M'(\Lambda i) \rar{g_2(\Lambda i)}\uar{M'(j\geq \Lambda i)}& N'(\Lambda i)\uar[swap]{N'(j\geq\Lambda i)} \\
      M(i) \rar{g_1(i)}\uar{\phi(i)}& N(i)\uar[swap]{\bar\phi(i)} 
    \end{tikzcd}      
  \]
  where the upper square is commutative because $g_2 : M'\rightarrow N'$ is natural, and the lower square is commutative by definition of morphisms in $\lil(P,D)$ as in Diagram~\eqref{eq:morphisms_of_interleavings}.
  The case $y = j \geq i' =x $ can be proved similary.

  \paragraph{\textbf{Functoriality of $F$}} The fact that $F((\id,\id)) = \id$ and $F((h_1,h_2)(g_1,g_2)) = F((h_1g_1,h_2g_2)) = F((h_1,h_2))F((g_1,g_2))$ is clear.  
  Thus $F:\lil(P,D)\to D^{\lace_\Lambda P}$ is indeed a functor.

  Next, we need to construct an inverse for $F$, which we denote by
  $R : D^{\lace_\Lambda P} \rightarrow \lil(P,D)$.

  \paragraph{\textbf{On objects}}
  For $V \in D^{\lace_\Lambda P}$, define
  \[
    R(V) = \left(V|_P, V|_{P'}, \phi(i) = V|_{(\Lambda i)' \geq i}, \psi(i) = V|_{(\Lambda i) \geq i'}\right),
  \]
  where we consider the restriction  $V|_{P'}$ as a representation in $D^P$ by a relabeling: define $V|_{P'}(i) = V(i')$ for $i \in P$ and $V|_{P'}(j \geq i) = V(j' \geq i')$ for $j \geq i$ in $P$.

  The natural transformations $\phi:V|_P \rightarrow V|_{P'}\Lambda$ and $\psi : V|_{P'} \rightarrow V|_P\Lambda$ are defined by
  \[
    \phi(i) = V((\Lambda i)' \geq i)
  \]
  and
  \[
    \psi(i) = V((\Lambda i) \geq i')
  \]
  for $i\in P$.
  To see that $\phi$ is indeed a natural transformation, we check that for $j\geq i$ in $P$,
  \[
    \begin{tikzcd}[column sep=8em]
      V|_P(j) \rar{\phi(j) = V((\Lambda j)' \geq j)} & (V|_{P'}\Lambda)(j) \\
      V|_P(i) \rar{\phi(i) = V((\Lambda i)' \geq i)} \uar{V(j\geq i)} & (V|_{P'}\Lambda)(i) \uar[swap]{(V|_{P'}\Lambda)(j\geq i) = V((\Lambda j)' \geq (\Lambda i)')}
    \end{tikzcd}
  \]
  commutes. This follows from the functoriality of $V$. Similarly, $\psi$ is a natural transformation.

  Finally, we need to check that $\phi:V|_P \rightarrow V|_{P'}\Lambda$ and $\psi : V|_{P'} \rightarrow V|_P\Lambda$ interleaves $V|_P$ and $V|_{P'}$ to verify that $R(V) = \left(V|_P, V|_{P'}, \phi,\psi\right) \in \lil(P,D)$. That is, we need to check the commutativity of diagrams as in Diagram~\eqref{eq:interleaving_morphisms}:
  \[
    \begin{tikzcd}
      V|_P \ar[rr]{}{V|_P\eta_{\Lambda\Lambda}}\ar[dr,swap]{}{\phi} & & V|_P\Lambda\Lambda \\
      & V|_{P'}\Lambda\ar[ur,swap]{}{\psi\Lambda} &
    \end{tikzcd}
    \text{ and }
    \begin{tikzcd}
      & V|_{P}\Lambda\ar[dr]{}{\phi\Lambda} & \\
      V|_{P'} \ar[rr,swap]{}{V|_{P'}\eta_{\Lambda\Lambda}} \ar[ur]{}{\psi} & & V|_{P'}\Lambda\Lambda\mathrlap{.}
    \end{tikzcd}
  \]
  The left diagram at any object $i\in P$ given by
  \[
    \begin{tikzcd}
      V(i) \ar[rr]{}{V(\Lambda\Lambda i \geq i) }\ar[dr,swap]{}{V((\Lambda i)'\geq i)} & & V(\Lambda\Lambda i) \\
      & V( (\Lambda i)') \ar[ur,swap]{}{ V((\Lambda\Lambda i \geq (\Lambda i)'))} &
    \end{tikzcd}
  \]
  clearly commutes by functoriality of $V$.
  The right diagram similarly commutes.

  \paragraph{\textbf{On morphisms}} Let $g : V\rightarrow W$ be a morphism in $D^{\lace_\Lambda P}$. Let us denote $R(V) = (V|_P, V|_{P'}, \phi, \psi)$ and $R(W) = (W|_P, W|_{P'}, \bar\phi, \bar\psi)$. We define a morphism $R(g) : R(V) \rightarrow R(W)$ by setting $R(g) = (g|_P, g|_{P'})$. The fact that $g|_P : V|_P \rightarrow W|_P$ and $g|_{P'} : V|_{P'} \rightarrow W|_{P'}$ are natural transformations follows immediately from naturality of $g$. Finally, we check that $(g|_{P}, g|_{P'})$ satisfies commutativity of diagrams  
  \[
    \begin{tikzcd}
      V|_{P} \rar{\phi} \dar{g|_{P}} & V|_{P'}\Lambda \dar{g|_{P'}\Lambda} \\
      W|_{P} \rar{\phi'} & W|_{P'}\Lambda
    \end{tikzcd}
    \text{ and }
    \begin{tikzcd}
      V|_{P'} \rar{\psi} \dar{g|_{P'}} & V|_{P}\Lambda \dar{g|_{P}\Lambda} \\
      W|_{P'} \rar{\psi'} & W|_{P}\Lambda
    \end{tikzcd}
  \]
  as in Diagram~\eqref{eq:morphisms_of_interleavings}.  
  For each $i \in P$, we have
  \[
    \begin{tikzcd}[column sep=large]
      V(i) \rar{V((\Lambda i)' \geq i)} \dar{g(i)} & V((\Lambda i)') \dar{g((\Lambda i)')} \\
      W(i) \rar{W((\Lambda i)' \geq i)} & W((\Lambda i)')
    \end{tikzcd}
    \text{ and }
    \begin{tikzcd}[column sep=large]
      V(i') \rar{V(\Lambda i \geq i')} \dar{g(i')} & V(\Lambda i) \dar{g(\Lambda i)} \\
      W(i') \rar{W(\Lambda i \geq i')} & W(\Lambda i)
    \end{tikzcd}
  \]
  which are clearly commutative by naturality of $g$. This shows that indeed $R(g) = (g|_{P}, g|_{P'}):R(V) \rightarrow R(W)$ is a morphism in $\lil(P,D)$.

  \paragraph{\textbf{Functoriality of $R$}} Functoriality of $R$ itself is straightforward.  
  $R(\id) = (\id,\id)$ is the identity, and $R(hg) = ((hg)|_{P}, (hg)|_{P'}) = (h|_{P},h|_{P'})(g|_{P},g|_{P'}) = R(h)R(g)$.
  
  \paragraph{\textbf{Inverse}} Finally, we check that $F$ and $R$ are inverses of each other. On morphisms, $RF(g_1,g_2) = (g_1,g_2)$ and $FR(g) = g$.  

  Now let  
  $(M,M',\phi,\psi) \in \lil(P,D)$. We have $RF(M,M',\phi,\psi) = R(V)$, where $V$ is defined as above. Namely,
  \begin{itemize}    
  \item \(V(i)=M(i)\) and \(V(i')=M'(i)\) for $i,i' \in \lace_\Lambda P$, 
  \item \(V(j\geq i) = M(j\geq i)\) and  \(V(j'\geq i') = M'(j\geq i)\) for all $j \geq i$ in $P$, and
  \item \(V(j'\geq i) = M'(j \geq \Lambda i) \phi(i)\) and \(V(j \geq i') = M(j \geq \Lambda i)\psi(i)\).
  \end{itemize}
  The first two conditions clearly imply $V|_P = M$, and $V|_{P'} = M'$.
  Finally,
  \[
    V|_{(\Lambda i)' \geq i)}(i) = M'(\Lambda i \geq \Lambda i) \phi(i) = \phi(i)
  \] and
  \[
    V|_{(\Lambda i) \geq i')}(i) = M(\Lambda i \geq \Lambda i) \psi(i) = \psi(i)
  \] so that
  $R(V) = (M,M',\phi,\psi)$. Thus, $RF$ is the identity.

  In the other direction, suppose that $V \in D^{\lace_\Lambda P}$. Then,
  \[
    FR(V) = F((V|_P, V|_{P'}, \phi = V|_{(\Lambda i)'\geq i)}, \psi = V|_{(\Lambda i)\geq i')}))
  \]
  by definition of $R(V)$. Denote $\bar V = FR(V)$. Then, by definition of $F$,
  \begin{itemize}    
  \item $\bar V(i) = V|_P(i) = V(i)$ and $\bar V(i') = V|_{P'}(i) = V(i')$ for $i,i' \in \lace_\Lambda P$, 
  \item $\bar V(j\geq i) = V|_P(j\geq i) = V(j\geq i)$ and  $\bar V(j'\geq i') = V|_{P'}(j\geq i) = V(j'\geq i')$ for all $j \geq i$ in $P$, and
  \item
    \[
      \begin{array}{rcl}
        \bar V(j'\geq i) &=& V|_{P'}(j \geq \Lambda i) \phi(i) \\
                         &=& V(j' \geq (\Lambda i)') V( (\Lambda i)' \geq i) \\
                         &=& V(j' \geq i)
      \end{array}      
    \]
    and
    \[
      \begin{array}{rcl}
        \bar V(j \geq i') &=& V|_{P}(j \geq \Lambda i) \psi(i) \\
                         &=& V(j \geq (\Lambda i)) V( (\Lambda i) \geq i') \\
                         &=& V(j \geq i').
      \end{array}
    \]    
  \end{itemize}
  This shows that $FR(V) = \bar V = V$. Thus, $FR$ is the identity.

  This completes the proof.  
\end{proof}

The definition of the category of interleavings $\lil(P,D)$ is complicated in that the objects are $4$-tuples $(M,M',\phi,\psi)$ such that $\phi$ and $\psi$ satisfy the interleaving commutativity conditions.
Theorem~\ref{thm:shoelace} states that we can ``package'' these four pieces of data as part of one representation $V = F(M,M',\phi,\psi)$, essentially treating persistence modules $M$ and $M'$ and interleaving morphisms $\phi$ and $\psi$ on the same level. Since $V$ itself is just a representation of the proset $\lace_\Lambda P$, we can now use representation-theoretic tools to study interleavings more directly.


\section{Iterated Shoelaces}
\label{sec:iterated_shoelaces}

With Theorem~\ref{thm:shoelace}, we can now think of $\Lambda$-interleavings of objects in $D^P$ as $D$-valued representations of the proset $\lace_\Lambda P$. From this perspective, the representations being interleaved are given the same footing as the interleaving morphisms, with everything viewed as features of a representation.

This observation enables the following iterated construction. We note that $\lace_\Lambda P$ itself is a proset. Thus, we can consider translations $\Upsilon:\lace_\Lambda P \rightarrow \lace_\Lambda P$, and construct the shoelace of the shoelace: $\lace_\Upsilon(\lace_\Lambda P)$. Again by Theorem~\ref{thm:shoelace}, representations of $\lace_\Upsilon(\lace_\Lambda P)$ can be thought of as $\Upsilon$-interleavings of $\Lambda$-interleavings of $D^P$.

In this section, we study aspects of this iterated construction. First, we start with two special classes of translations of $\lace_\Lambda P$ induced from certain translations $\Gamma:P\rightarrow P$ of the base proset $P$.
\begin{proposition}[Induced translation]
  \label{defn:induced_translation}
  Let $P$ be a proset and \(\Lambda,\Gamma\) be translations on \(P\) such that $\Lambda \Gamma = \Gamma\Lambda$.
  Define $\wbar{\Gamma}$ by
  \begin{enumerate}
  \item $\wbar\Gamma(i) = \Gamma(i)$, $\wbar\Gamma(i') = (\Gamma(i))'$ for $i \in P\subset \lace_\Lambda P$, and 
  \item $\wbar\Gamma(x\leq y) = (\wbar\Gamma(x) \leq \wbar\Gamma(y))$ for $x,y \in \lace_\Lambda P$.
  \end{enumerate}
  Then $\wbar{\Gamma}$ is a translation $\lace_\Lambda P \rightarrow\lace_\Lambda P$, which is called the \emph{translation  induced by} $\Gamma$.  
\end{proposition}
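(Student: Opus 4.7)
The plan is to verify the two defining conditions of a translation on the proset $\lace_\Lambda P$: that $\wbar\Gamma$ is a functor, and that $x \leq \wbar\Gamma(x)$ for every $x \in \lace_\Lambda P$. Since $\lace_\Lambda P$ is thin (as a proset viewed as a category), functoriality of $\wbar\Gamma$ reduces to the order-preservation statement: whenever $x \leq y$ in $\lace_\Lambda P$, one has $\wbar\Gamma(x) \leq \wbar\Gamma(y)$ in $\lace_\Lambda P$. The preservation of identities and composition is then automatic by the Thin Lemma.

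I would organize the order-preservation check by the four cases of Definition~\ref{defn:shoelace}. In the two ``stay'' cases $(x,y) = (i,j)$ or $(x,y) = (i',j')$ with $i \leq j$ in $P$, functoriality of $\Gamma$ on $P$ gives $\Gamma(i) \leq \Gamma(j)$, which immediately yields $\wbar\Gamma(x) \leq \wbar\Gamma(y)$ in $\lace_\Lambda P$. In the two ``swap'' cases $(x,y) = (i,j')$ or $(x,y) = (i',j)$ with $\Lambda i \leq j$, one must verify $\Lambda\Gamma(i) \leq \Gamma(j)$ in $P$ in order to invoke case (2) of Definition~\ref{defn:shoelace}. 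Here is where the commutation hypothesis $\Lambda\Gamma = \Gamma\Lambda$ is used: applying the functor $\Gamma$ to $\Lambda i \leq j$ yields $\Gamma\Lambda(i) \leq \Gamma(j)$, which by commutation becomes $\Lambda\Gamma(i) \leq \Gamma(j)$, as needed.

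For the translation property itself, let $x \in \lace_\Lambda P$. If $x = i \in P$, then $\wbar\Gamma(x) = \Gamma(i)$, and since $\Gamma$ is a translation of $P$ we have $i \leq \Gamma(i)$ in $P$, hence $x \leq \wbar\Gamma(x)$ in $\lace_\Lambda P$ by case (1) of Definition~\ref{defn:shoelace}. The case $x = i' \in P'$ is symmetric, using that $i \leq \Gamma(i)$ in $P$ implies $i' \leq (\Gamma i)'$ in $\lace_\Lambda P$.

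The only real content is the swap-case verification, which is the single place where the commutation hypothesis enters; everything else is direct unpacking. I do not expect any serious obstacle, only careful bookkeeping across the cases $(i,j)$, $(i',j')$, $(i,j')$, $(i',j)$, with the commutation $\Lambda\Gamma = \Gamma\Lambda$ serving precisely to propagate the across-relation $\Lambda i \leq j$ through the functor $\Gamma$.
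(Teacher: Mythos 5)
Your proposal is correct and follows essentially the same route as the paper: verify order-preservation in the four cases of the shoelace relation (with the commutation hypothesis entering precisely in the two swap cases), then check $x \leq \wbar\Gamma(x)$ for both copies of $P$. Your explicit appeal to the Thin Lemma to dispose of identity/composition preservation is a minor clarification where the paper simply asserts functoriality is easy to check.
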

\begin{proof}
  For any $x\leq y$ in $\lace_\Lambda P$, we first check that the unique morphism $\wbar\Gamma (x) \leq \wbar\Gamma (y)$ exists.  
  So suppose that $x \leq y \in \lace_\Lambda P$. We consider four cases depending on where $x$ and $y$ are located in $\lace_\Lambda P$.

  If $(x,y)=(i,j) \in P \times P$ (resp. $(x,y)=(i',j') \in P' \times P'$), we have $\wbar\Gamma (x) = \Gamma (i) \leq \Gamma (j) = \wbar\Gamma (y)$ (resp. $\wbar\Gamma (x) = (\Gamma (i))' \leq (\Gamma (j))' = \wbar\Gamma (y)$ ) since $\Gamma$ is a functor.

  Otherwise,  
  $(x,y) = (i,j') \in P \times P'$ (resp. $(x,y) = (i',j) \in P' \times P$). Since $x = i \leq j' = y$ in $\lace_\Lambda P$ (resp. $x = i' \leq j = y$ in $\lace_\Lambda P$), we have $\Lambda i \leq j$ in $P$. Thus, $\Lambda \Gamma (i) = \Gamma (\Lambda (i)) \leq \Gamma (j) $ in $P$ since $\Gamma \Lambda =\Lambda \Gamma$ and $\Gamma$ is a functor. By definition of $\leq$ in $\lace_\Lambda P$, we have $\wbar\Gamma (x) = \Gamma (i)  \leq (\Gamma (j))' = \wbar\Gamma (y)$ (resp. $\wbar\Gamma (x) = (\Gamma (i))'  \leq \Gamma (j) = \wbar\Gamma (y)$).  Thus $\wbar\Gamma$ can be defined.

It is easy to see that $\wbar\Gamma$ is a functor. Finally, $\wbar\Gamma(i) = \Gamma(i) \geq i$ and $\wbar\Gamma(i') = (\Gamma(i))' \geq (i)'$ are clear. Thus, $\wbar\Gamma$ is indeed a translation of $\lace_\Lambda P$.
\end{proof}

The second type of induced translation we consider is a ``twisted'' translation.

\begin{proposition}[Induced twisted translation]
  \label{defn:twisted_translation}
  Let $P$ be a proset and \(\Lambda,\Gamma\) be translations on \(P\) such that $\Lambda \Gamma = \Gamma\Lambda$ and $\Lambda \leq \Gamma$.
  Define $\wtilde{\Gamma}$ by
  \begin{enumerate}
  \item $\wtilde\Gamma(i) = (\Gamma(i))'$, $\wtilde\Gamma(i') = \Gamma(i)$ for $i \in P\subset \lace_\Lambda P$, and 
  \item $\wtilde\Gamma(x\leq y) = (\wtilde\Gamma(x) \leq \wtilde\Gamma(y))$ for $x,y \in \lace_\Lambda P$.
  \end{enumerate}
  Then $\wtilde{\Gamma}$ is a translation $\lace_\Lambda P \rightarrow\lace_\Lambda P$, which is called the \emph{twisted translation induced} by $\Gamma$.
\end{proposition}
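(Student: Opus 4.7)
The plan is to mirror the structure of the proof of Proposition~\ref{defn:induced_translation}, with two modifications: each case now involves a swap of copies between $P$ and $P'$, and verifying the translation property $x \leq \wtilde\Gamma(x)$ will require the hypothesis $\Lambda \leq \Gamma$ (which was not needed in the non-twisted case because $\wbar\Gamma$ preserved each copy).

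First I would show that for every morphism $x \leq y$ in $\lace_\Lambda P$, the candidate morphism $\wtilde\Gamma(x) \leq \wtilde\Gamma(y)$ actually exists. I would split into the same four cases as before based on the location of $(x,y)$. The two ``stay'' cases $(i,j) \in P\times P$ and $(i',j') \in P'\times P'$ are immediate: from $i \leq j$ in $P$ and functoriality of $\Gamma$ we get $\Gamma(i) \leq \Gamma(j)$, which translates directly to $(\Gamma(i))' \leq (\Gamma(j))'$ or $\Gamma(i) \leq \Gamma(j)$ in $\lace_\Lambda P$ depending on the case. For the two ``swap'' cases $(i,j') \in P\times P'$ and $(i',j) \in P'\times P$, the hypothesis $x \leq y$ unpacks to $\Lambda(i) \leq j$ in $P$; applying $\Gamma$ and using $\Gamma\Lambda = \Lambda\Gamma$ yields $\Lambda(\Gamma(i)) = \Gamma(\Lambda(i)) \leq \Gamma(j)$, which by clause (2) of Definition~\ref{defn:shoelace} gives exactly $(\Gamma(i))' \leq \Gamma(j)$ and $\Gamma(i) \leq (\Gamma(j))'$ as required.

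Next I would observe that $\wtilde\Gamma$ is automatically a functor: $\lace_\Lambda P$ is thin, so by the Thin Lemma the identities $\wtilde\Gamma(\id_x) = \id_{\wtilde\Gamma(x)}$ and $\wtilde\Gamma(f)\wtilde\Gamma(g) = \wtilde\Gamma(fg)$ hold for free once the action on morphisms is well-defined. Finally, for the translation condition $x \leq \wtilde\Gamma(x)$, I would treat $x = i \in P$ and $x = i' \in P'$ separately. In both cases the required relation in $\lace_\Lambda P$ is a swap, and unpacks via clause (2) to the inequality $\Lambda(i) \leq \Gamma(i)$ in $P$; this is precisely the assumption $\Lambda \leq \Gamma$.

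The only real subtlety, and the main place the argument departs from the previous proposition, is this last verification: because $\wtilde\Gamma$ swaps the two copies of $P$, the test $i \leq \wtilde\Gamma(i)$ becomes a swap-type comparison rather than an in-copy comparison, and therefore demands that $\Lambda(i) \leq \Gamma(i)$ rather than merely $i \leq \Gamma(i)$. I do not anticipate any calculational obstacle; the proof is essentially a case analysis exploiting commutativity $\Lambda\Gamma = \Gamma\Lambda$ in the four morphism cases and the inequality $\Lambda \leq \Gamma$ in the translation step.
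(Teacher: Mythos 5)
Your proposal is correct and follows essentially the same route as the paper's proof: a four-case check that $\wtilde\Gamma$ is well-defined on morphisms (using $\Gamma\Lambda = \Lambda\Gamma$ in the swap cases), followed by verifying the translation property $x \leq \wtilde\Gamma(x)$, which reduces to $\Lambda(i) \leq \Gamma(i)$ and hence requires $\Lambda \leq \Gamma$. Your explicit invocation of the Thin Lemma for functoriality is a mild elaboration of the paper's remark that functoriality is ``easy to see,'' but the substance is the same.
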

\begin{proof}
Similar to the proof of Proposition~\ref{defn:induced_translation}, 
we first check that  the unique morphism $\wtilde\Gamma (x) \leq \wtilde\Gamma (y)$ exists for any $x\leq y$ in $\lace_\Lambda P$. So, suppose that $x \leq y \in \lace_\Lambda P$.

If $(x,y)=(i,j)\in P\times P$ (resp. $(x,y)=(i',j') \in P' \times P'$), we have $\wtilde\Gamma(i)= (\Gamma(i))' \leq (\Gamma(j))' = \wtilde\Gamma(j)$ (resp. $\wtilde\Gamma(i')= \Gamma(i) \leq \Gamma(j) = \wtilde\Gamma(j')$) since $\Gamma$ is a functor.
Otherwise, $(x,y) = (i,j') \in P \times P'$ (resp. $(x,y) = (i',j) \in P' \times P$).
As above, we have $\Lambda (\Gamma (i)) \leq \Gamma (j) $ in $P$.

By the definition of $\leq$ in $\lace_\Lambda P$, we have $\wtilde\Gamma (x) = (\Gamma (i))' \leq \Gamma (j) = \wtilde\Gamma(y)$ (resp. $\wtilde\Gamma (x) = \Gamma (i) \leq (\Gamma (j))' = \wtilde\Gamma(y)$).
Thus $\wtilde\Gamma$ can be defined.

Finally check that $\wtilde\Gamma$ satisfies $x\leq \wtilde\Gamma(x)$ for all $x \in \lace_\Lambda P$.
Since $\Lambda (i) \leq \Gamma (i)$ for any $i \in P$ and
by definition of $\leq$ in $\lace_\Lambda P$, 
we have $i \leq (\Gamma(i))' = \wtilde\Gamma(i)$ and $(i)' \leq \Gamma (i)= \wtilde\Gamma(i')$. 
Thus, $\wtilde\Gamma$ is indeed a translation of $\lace_\Lambda P$.
\end{proof}

  The main difference between $\wtilde\Gamma$ and $\wbar\Gamma$ is the presence of a ``twist'' in defining the effect of $\wtilde\Gamma$ on objects in $\lace_\Lambda P$. The twisted version $\widetilde{\Gamma}$ sends objects of $P$ to objects of $P'$ and vice-versa.  
  Introducing this twist necessitates the additional condition that $\Lambda \leq \Gamma$ in order to have  $x\leq \wtilde\Gamma(x)$ for all $x \in \lace_\Lambda P$.

Next, we study the composition of these induced translations.
\begin{lemma}
  \label{lem:induced_props}
  Let $P$ be a proset, $\Lambda$ a translation on $P$, and $\Gamma_1,\Gamma_2,\Upsilon_1,\Upsilon_2$ be translations on $P$ that commute with $\Lambda$ such that $\Lambda \leq \Upsilon_1$ and $\Lambda \leq \Upsilon_2$. Then
  \begin{enumerate}
  \item $\wbar{\Gamma_1\Gamma_2} = \wbar{\Gamma_1}\;\wbar{\Gamma_2}$,
  \item $\wbar{\Upsilon_1\Upsilon_2} = \wtilde{\Upsilon_1}\wtilde{\Upsilon_2}$,
  \item $\wtilde{\Gamma_1\Upsilon_2} = \wbar{\Gamma_1}\wtilde{\Upsilon_2}$ and $\wtilde{\Upsilon_1\Gamma_2} = \wtilde{\Upsilon_1}\wbar{\Gamma_2}$.
  \end{enumerate}
\end{lemma}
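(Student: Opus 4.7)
The plan is to exploit the thinness of the shoelace $\lace_\Lambda P$. Since $\lace_\Lambda P$ is a proset viewed as a category, there is at most one morphism between any two objects, so any functor out of $\lace_\Lambda P$ is determined by its object function. In particular, two translations of $\lace_\Lambda P$ are equal as soon as they agree on objects, and each of the three claimed equalities reduces to a pointwise check on $i \in P$ and $i' \in P'$.

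Before the object-level check, I would first verify that each induced translation appearing in the statement is well-defined in the sense of Proposition~\ref{defn:induced_translation} or Proposition~\ref{defn:twisted_translation}. For the bars, we need commutation with $\Lambda$, and this is inherited under composition: $\Lambda\Gamma_1\Gamma_2 = \Gamma_1\Lambda\Gamma_2 = \Gamma_1\Gamma_2\Lambda$, and similarly $\Lambda$ commutes with $\Upsilon_1\Upsilon_2$. For the tildes in~(2) the bound $\Lambda \leq \Upsilon_j$ is directly assumed. For~(3), I would observe additionally that $\Gamma_1\Upsilon_2 \geq \Upsilon_2 \geq \Lambda$ pointwise, since $\Gamma_1$ is a translation and thus $\Gamma_1(x)\geq x$ for all $x$; symmetrically $\Upsilon_1\Gamma_2 \geq \Lambda$.

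The object-level check is guided by the following observation: a \emph{bar} preserves primes, sending $P\to P$ and $P'\to P'$, while a \emph{tilde} swaps them. Under composition, prime parity therefore behaves like a sign: bar$\circ$bar and tilde$\circ$tilde both preserve primes (yielding a bar), while bar$\circ$tilde and tilde$\circ$bar both swap (yielding a tilde). Meanwhile, the underlying $P$-value is always obtained by applying the $\Gamma_j$'s and $\Upsilon_j$'s in order. For instance, on $i \in P$ one has $(\wtilde{\Upsilon_1}\wtilde{\Upsilon_2})(i) = \wtilde{\Upsilon_1}((\Upsilon_2(i))') = \Upsilon_1\Upsilon_2(i) = \wbar{\Upsilon_1\Upsilon_2}(i)$, and on $i' \in P'$ one has $(\wtilde{\Upsilon_1}\wtilde{\Upsilon_2})(i') = \wtilde{\Upsilon_1}(\Upsilon_2(i)) = (\Upsilon_1\Upsilon_2(i))' = \wbar{\Upsilon_1\Upsilon_2}(i')$, giving~(2). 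Identity~(1) and the two halves of~(3) are analogous one-line computations, each simply unwinding one bar or tilde at a time.

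The main (mild) obstacle is just the bookkeeping of primes through the case analysis; once the sign-like behavior above is identified, each of~(1), (2), (3) follows at once from the definitions together with thinness of $\lace_\Lambda P$.
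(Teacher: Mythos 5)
Your proof is correct and follows essentially the same approach as the paper's (a direct pointwise computation on objects, with the morphism assignments automatic because $\lace_\Lambda P$ is thin). You also verify a detail the paper leaves implicit, namely that $\Lambda \leq \Gamma_1\Upsilon_2$ and $\Lambda \leq \Upsilon_1\Gamma_2$ so that the twisted translations in part~(3) are actually well-defined; this check is genuinely needed and your argument for it is correct.
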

\begin{proof}
  Statement  
  $(1)$ follows immediately from the definition of $\wbar{\Gamma}$.  
  A direct computation shows statement  
  $(2)$, and  
  composing two twists gives the untwisted version. Statement $(3)$  
  also follows from a similar check.
\end{proof}

We are now able to state our main theorem concerning interleavings being interleaved.
\begin{theorem}
  \label{thm:twist_interleaved_interleavings}
  Let $M,N \in D^P$. Any two $\Lambda$-interleavings $(M,N,\phi,\psi)$ and $(M,N,\phi',\psi')$ of $M$ and $N$ are $\wtilde\Lambda$-interleaved.
\end{theorem}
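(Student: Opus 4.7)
The plan is to apply Theorem~\ref{thm:shoelace} and recast both $\Lambda$-interleavings as representations $V = F(M,N,\phi,\psi)$ and $V' = F(M,N,\phi',\psi')$ in $D^{\lace_\Lambda P}$; a $\wtilde\Lambda$-interleaving between them is then just a pair of morphisms $\alpha\colon V\to V'\wtilde\Lambda$ and $\beta\colon V'\to V\wtilde\Lambda$ in $D^{\lace_\Lambda P}$ satisfying the interleaving conditions with respect to $\wtilde\Lambda$. Since the twisted translation $\wtilde\Lambda$ swaps the two copies of $P$ while shifting by $\Lambda$, we have $(V'\wtilde\Lambda)(i)=N(\Lambda i)$ and $(V'\wtilde\Lambda)(i')=M(\Lambda i)$, so the components of $\alpha$ automatically have the shapes of interleaving morphisms. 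My proposed formulas blend the data from the two interleavings:
\[
  \alpha(i)=\phi(i),\quad \alpha(i')=\psi'(i),\qquad \beta(i)=\phi'(i),\quad \beta(i')=\psi(i).
\]

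Next, I would verify naturality of $\alpha$ (and dually $\beta$) by splitting on the type of morphism in $\lace_\Lambda P$. The ``stay'' cases (both endpoints in $P$ or both in $P'$) reduce immediately to naturality of $\phi$ or $\psi'$ respectively. For the ``swap'' cases I would expand using the formulas for $V$ and $V'$ on crossing morphisms from Theorem~\ref{thm:shoelace}. The case $i\leq j'$ with $\Lambda i\leq j$ reduces, after expansion, to the naturality of $\psi'$ in $P$. The case $i'\leq j$ is the delicate one: expanding the two sides of the naturality square yields products $\phi(\Lambda i)\psi(i)$ and $\phi'(\Lambda i)\psi'(i)$, which coincide because both interleaving conditions force them to equal $N\eta_{\Lambda\Lambda}(i)$.

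The $\wtilde\Lambda$-interleaving conditions $(\beta\wtilde\Lambda)\alpha=V\eta_{\wtilde\Lambda\wtilde\Lambda}$ and $(\alpha\wtilde\Lambda)\beta=V'\eta_{\wtilde\Lambda\wtilde\Lambda}$ follow from Lemma~\ref{lem:induced_props}(2), which gives $\wtilde\Lambda\wtilde\Lambda=\wbar{\Lambda\Lambda}$, so that the right-hand sides evaluate to $M(\Lambda\Lambda i\geq i)$ on $P$ and $N(\Lambda\Lambda i\geq i)$ on $P'$. On $P$, $(\beta\wtilde\Lambda\,\alpha)(i)=\psi(\Lambda i)\phi(i)=M\eta_{\Lambda\Lambda}(i)$ by the interleaving condition for $(\phi,\psi)$; on $P'$, $(\beta\wtilde\Lambda\,\alpha)(i')=\phi'(\Lambda i)\psi'(i)=N\eta_{\Lambda\Lambda}(i)$ by the interleaving condition for $(\phi',\psi')$. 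The dual identity is symmetric with the roles of primed and unprimed swapped.

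The main obstacle is identifying the correct blending: using $(\phi,\psi)$ alone or $(\phi',\psi')$ alone for both $\alpha$ and $\beta$ produces mismatched compositions, so neither obvious choice works. The conceptual key is that each of $\alpha,\beta$ must cross-combine morphisms from the two interleavings, so that the two separate interleaving conditions can be invoked in tandem — each supplying exactly one of the two required identities $(\psi\Lambda)\phi=M\eta_{\Lambda\Lambda}$ and $(\phi'\Lambda)\psi'=N\eta_{\Lambda\Lambda}$ on the nose. Once this combination is chosen, every verification is a short direct calculation.
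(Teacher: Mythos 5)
Your argument matches the paper's proof essentially verbatim: both apply Theorem~\ref{thm:shoelace} to recast the interleavings as representations of $\lace_\Lambda P$ and construct the $\wtilde\Lambda$-interleaving morphisms by cross-combining $\phi,\psi,\phi',\psi'$, with the same key observation that the two original interleaving identities supply exactly the two compositions needed. Your explicit component formulas $\beta(i)=\phi'(i)$, $\beta(i')=\psi(i)$ are the ones forced by domain/codomain matching and are correct; the paper's shorthand ``$\Psi=(\psi,\phi')$'' is best read as ``take $\psi$ from the first interleaving and $\phi'$ from the second,'' which produces exactly your $\beta$.
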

\begin{proof}
  Let $V = (M,N,\phi,\psi)$ and $V' = (M,N,\phi',\psi')$ and view $V,V'$ as objects of $D^{\lace_\Lambda P}$ via Theorem~\ref{thm:shoelace}. We simply need to provide a pair $\Phi$, $\Psi$ of $\wtilde\Lambda$-interleaving morphisms between $V$ and $V'$.

  We will  
  define $\Phi: V\rightarrow V'\wtilde\Lambda$, a morphism between representations of $\lace_\Lambda P$. Restricted to $P$, we note that $V|_P = M$ and $(V'\wtilde\Lambda) |_P = N\Lambda$, and restricted to $P'$, $V|_{P'} = N$ and $(V'\wtilde\Lambda) |_{P'} = M\Lambda$. So, we choose $\Phi = (\phi, \psi')$. That is, $\Phi|_P = \phi$ and $\Phi|_{P'} = \psi'$.
  
  We represent this by the following diagram, which only makes sense ``up to appropriate $\Lambda$-composition'', where we are suppressing $\Lambda$ shifts in order to have one diagram  
  \[
    \begin{tikzcd}
      & \small{P} & \small{P'} \\[-1.8em]
      V:\dar[swap]{\Phi=(\phi,\psi')}
      &
      M \rar[shift left]{\phi}\dar{\phi} & N \lar[shift left]{\psi} \dar{\psi'}      
      \\
      V'\wtilde\Lambda:
      &
      N \rar[shift left]{\psi'} & M\mathrlap{.} \lar[shift left]{\phi'}
    \end{tikzcd}
  \]
   Note that the bottom row, representing $V'\wtilde\Lambda$, has the places of $M$ and $N$ transposed. This comes from the use of the twisted induced translation.
  To be precise, we expand out the above diagram as
  \[
    \begin{tikzcd}
      & \small{P} & \small{P'} & \small{P} & \small{P'}\\[-1.8em]
      V:\dar[swap]{\Phi=(\phi,\psi')}
      &
      M \rar{\phi}\dar{\phi} & N\Lambda \dar{\psi'\Lambda}
      &
      M\Lambda \dar{\phi\Lambda} & N \lar{\psi} \dar{\psi'}
      \\
      V'\wtilde\Lambda:
      &
      N\Lambda \rar{\psi'\Lambda} & M\Lambda\Lambda
      &
      N\Lambda\Lambda & M\Lambda\mathrlap{.} \lar{\phi'\Lambda}
    \end{tikzcd}
  \]
  
  We need to check the commutativity of both diagrams in order to show that $\Phi$ is a morphism. The left diagram is clearly commutative. Commutativity of the right diagram follows from the fact that  
  \[
    (\phi\Lambda)(\psi) = N\eta_{\Lambda\Lambda} = (\phi'\Lambda)(\psi')
  \]
  by the definition of interleavings. Thus, $\Phi: V\rightarrow V'\wtilde\Lambda$ is indeed a morphism of $\lace_\Lambda P$ representations.

  In the opposite direction, and by a similar analysis, we choose $\Psi = (\psi, \phi')$, schematically represented ``up to appropriate $\Lambda$-composition'' by  
  \[
    \begin{tikzcd}
      & \small{P} & \small{P'} \\[-1.8em]
      V\wtilde\Lambda:
      &
      M \rar[shift left]{\phi} & N \lar[shift left]{\psi} 
      \\
      V'\uar{\Psi=(\psi,\phi')}:
      &
      N \rar[shift left]{\psi'}\uar{\psi} & M \lar[shift left]{\phi'}\uar{\phi'}
    \end{tikzcd}    
  \]
  which means  
  \[
    \begin{tikzcd}
      & \small{P} & \small{P'} & \small{P} & \small{P'}\\[-1.8em]
      V\wtilde\Lambda:
      &
      M\Lambda \rar{\phi\Lambda} & N\Lambda\Lambda
      &
      M\Lambda\Lambda & N\Lambda \lar{\psi\Lambda}
      \\
      V' \uar{\Psi=(\psi,\phi')}:
      &
      N \rar{\psi'} \uar{\psi} & M\Lambda \uar{\phi'\Lambda}
      &
      N\Lambda \uar{\psi\Lambda} & M\mathrlap{.} \lar{\phi'} \uar{\phi'}
    \end{tikzcd}
  \]
  
  Finally,  
  \[
    (\Psi\wtilde\Lambda)\Phi = V\eta_{\wtilde\Lambda\wtilde\Lambda}
  \] and
  \[
    (\Phi\wtilde\Lambda)\Psi = V'\eta_{\wtilde\Lambda\wtilde\Lambda}
  \] follows immediately by restricting to $P$ and $P'$ where the equalities hold by the definition of $\Phi$ and $\Psi$.  
\end{proof}

\begin{remark}
  Theorem~\ref{thm:twist_interleaved_interleavings} is optimal in the following sense. In order to define $\wtilde\Gamma$, we need the condition $\Lambda \leq \Gamma$. Thus, the statement for $\Gamma=\Lambda$ in Theorem~\ref{thm:twist_interleaved_interleavings} cannot be improved upon with regard to  
  $\wtilde\Gamma$-interleavings of the given $\Lambda$-interleavings.
\end{remark}

In order to ``remove'' the twist in Theorem~\ref{thm:twist_interleaved_interleavings}, we use the following observation. 
\begin{lemma}
  \label{lem:worse_interleaving}
  Let $\Lambda, \Gamma$ be translations of a proset $P$, and let $M,N \in D^P$ be $\Lambda$-interleaved. If $\Lambda \leq \Gamma$, then $M$ and $N$ are $\Gamma$-interleaved.
\end{lemma}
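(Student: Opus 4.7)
The plan is to post-compose the given $\Lambda$-interleaving morphisms with morphisms coming from the inequality $\Lambda \leq \Gamma$, thereby ``extending'' them to a $\Gamma$-interleaving. Since $\Lambda \leq \Gamma$ in $\Trans(P)$, for each $x \in P$ we have a morphism $\Lambda(x) \leq \Gamma(x)$ in $P$; by the Thin Lemma these assemble into a natural transformation $\kappa : \Lambda \to \Gamma$ of functors $P \to P$. Post-composing with $M$ and $N$ yields natural transformations $M\kappa : M\Lambda \to M\Gamma$ and $N\kappa : N\Lambda \to N\Gamma$, whose components at $x$ are $M(\Lambda(x) \leq \Gamma(x))$ and $N(\Lambda(x) \leq \Gamma(x))$ respectively.

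Given $\Lambda$-interleaving morphisms $\phi : M \to N\Lambda$ and $\psi : N \to M\Lambda$, I would set
\[
  \phi' = (N\kappa) \circ \phi : M \to N\Gamma, \qquad \psi' = (M\kappa) \circ \psi : N \to M\Gamma,
\]
which are natural transformations as composites of such. It then remains to verify the two interleaving triangles for $\Gamma$.

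To verify $(\psi'\Gamma)\phi' = M\eta_{\Gamma\Gamma}$, I would evaluate at each $x \in P$ and unpack the four factors $\phi(x)$, $N(\Lambda(x) \leq \Gamma(x))$, $\psi(\Gamma(x))$, and $M(\Lambda\Gamma(x) \leq \Gamma\Gamma(x))$. Applying naturality of $\psi$ to the morphism $\Lambda(x) \leq \Gamma(x)$ in $P$ rewrites the middle two factors as $M(\Lambda\Lambda(x) \leq \Lambda\Gamma(x)) \circ \psi(\Lambda(x))$. The original $\Lambda$-interleaving hypothesis then collapses $\psi(\Lambda(x)) \circ \phi(x)$ to $M\eta_{\Lambda\Lambda}(x) = M(\Lambda\Lambda(x) \geq x)$, leaving a chain of $M$-images of morphisms in the thin proset $P$. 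By the Thin Lemma this chain equals $M(\Gamma\Gamma(x) \geq x) = M\eta_{\Gamma\Gamma}(x)$, as required. The second triangle $(\phi'\Gamma)\psi' = N\eta_{\Gamma\Gamma}$ follows by the symmetric computation with the roles of $\phi, \psi$ swapped. There is no real obstacle here; the argument is a routine diagram chase, with the key input being that the extra padding supplied by $\kappa$ on the target commutes past the given interleaving morphisms by their naturality, after which thinness of $P$ finishes the job.
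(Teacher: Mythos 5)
Your proof is correct and follows exactly the paper's approach: both post-compose the given $\Lambda$-interleaving morphisms with $N(\xi)$ and $M(\xi)$ where $\xi\colon\Lambda\to\Gamma$ is the natural transformation arising from $\Lambda\leq\Gamma$. The only difference is that the paper declares the resulting triangles ``clear,'' whereas you supply the diagram chase (naturality of $\psi$ to slide the padding past, the original interleaving identity, then thinness of $P$), which is a welcome elaboration.
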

\begin{proof}
By assumption, we have a pair of $\Lambda$-interleaving morphisms
  \(\phi:M\to N\Lambda\), \(\psi:N\to M\Lambda\). 
Since $\Lambda \leq \Gamma$, we have a morphism $\xi: \Lambda \to \Gamma$. 
Thus, we can define a pair of morphisms 
   \(\wtilde\phi:M\to N\Gamma\), \(\wtilde\psi:N\to M\Gamma\) 
as $\wtilde\phi:=N(\xi)\circ\phi$ and $\wtilde\psi:=M(\xi)\circ\psi$.  
Then it is clear that the following diagrams commute:
\[
    \begin{tikzcd}
      M \ar[rr]{}{M\eta_{\Gamma\Gamma}}\ar[dr,swap]{}{\wtilde\phi} & & M\Gamma\Gamma \\
      & N\Gamma\ar[ur,swap]{}{\wtilde\psi\Gamma} &
    \end{tikzcd}
    \text{ and }
    \begin{tikzcd}
      & M\Gamma\ar[dr]{}{\wtilde\phi \Gamma} & \\
      N \ar[rr,swap]{}{N\eta_{\Gamma\Gamma}} \ar[ur]{}{\wtilde\psi} & & N\Gamma\Gamma\mathrlap{.}
    \end{tikzcd}
\]
\end{proof}

Lemma~\ref{lem:worse_interleaving}, together with the observation that twisting twice gives an untwisted induced translation, gives the following ``untwisted version'' of Theorem~\ref{thm:twist_interleaved_interleavings}.
\begin{corollary}
  \label{cor:interleaved_interleavings}
  Let $M,N \in D^P$. Any two $\Lambda$-interleavings $(M,N,\phi,\psi)$ and $(M,N,\phi',\psi')$ of $M$ and $N$ are $\wbar{\Lambda\Lambda}$-interleaved.
\end{corollary}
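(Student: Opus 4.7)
The plan is to combine the three ingredients already established: Theorem~\ref{thm:twist_interleaved_interleavings}, Lemma~\ref{lem:induced_props}(2), and Lemma~\ref{lem:worse_interleaving}, applied at the level of representations of the shoelace proset $\lace_\Lambda P$ via Theorem~\ref{thm:shoelace}. The idea is that Theorem~\ref{thm:twist_interleaved_interleavings} already provides a $\wtilde\Lambda$-interleaving between the two given $\Lambda$-interleavings (viewed as objects of $D^{\lace_\Lambda P}$), and we only need to enlarge the translation $\wtilde\Lambda$ to $\wbar{\Lambda\Lambda}$ without losing the interleaving property.

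First, I would identify the two interleavings $V = (M,N,\phi,\psi)$ and $V' = (M,N,\phi',\psi')$ with the corresponding objects of $D^{\lace_\Lambda P}$ using Theorem~\ref{thm:shoelace}, and apply Theorem~\ref{thm:twist_interleaved_interleavings} to obtain a pair of $\wtilde\Lambda$-interleaving morphisms between $V$ and $V'$ in $D^{\lace_\Lambda P}$. Next, I would invoke Lemma~\ref{lem:induced_props}(2) with $\Upsilon_1 = \Upsilon_2 = \Lambda$ to conclude that $\wbar{\Lambda\Lambda} = \wtilde\Lambda\,\wtilde\Lambda$ as translations of $\lace_\Lambda P$. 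Since $\wtilde\Lambda$ is itself a translation of $\lace_\Lambda P$, applying it to any $x \in \lace_\Lambda P$ gives $\wtilde\Lambda(x) \leq \wtilde\Lambda(\wtilde\Lambda(x)) = \wbar{\Lambda\Lambda}(x)$, so $\wtilde\Lambda \leq \wbar{\Lambda\Lambda}$ in $\Trans(\lace_\Lambda P)$.

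Finally, I would apply Lemma~\ref{lem:worse_interleaving} with base proset $\lace_\Lambda P$ (and category $D$), using the inequality $\wtilde\Lambda \leq \wbar{\Lambda\Lambda}$ just established, to upgrade the $\wtilde\Lambda$-interleaving of $V$ and $V'$ to a $\wbar{\Lambda\Lambda}$-interleaving. Pulling back through the isomorphism of Theorem~\ref{thm:shoelace} yields the desired conclusion for the original $\Lambda$-interleavings.

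There is no real obstacle here: all the substantive work has already been done in Theorem~\ref{thm:twist_interleaved_interleavings}, and the corollary is essentially a bookkeeping step that combines the twist-squaring identity from Lemma~\ref{lem:induced_props} with the monotonicity of interleavings in the translation parameter from Lemma~\ref{lem:worse_interleaving}. The only mildly delicate point is observing that Lemma~\ref{lem:worse_interleaving} is formulated for an arbitrary proset and so applies to $\lace_\Lambda P$ without modification.
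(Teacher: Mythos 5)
Your proof is correct and follows essentially the same route as the paper: convert to representations of $\lace_\Lambda P$ via Theorem~\ref{thm:shoelace}, obtain the $\wtilde\Lambda$-interleaving from Theorem~\ref{thm:twist_interleaved_interleavings}, use Lemma~\ref{lem:induced_props}(2) to identify $\wtilde\Lambda\wtilde\Lambda = \wbar{\Lambda\Lambda}$, note $\wtilde\Lambda \leq \wbar{\Lambda\Lambda}$, and finish with Lemma~\ref{lem:worse_interleaving}. Your explicit justification of $\wtilde\Lambda \leq \wtilde\Lambda\wtilde\Lambda$ (from $\wtilde\Lambda$ being a translation) is a small detail the paper leaves implicit, but the argument is identical in substance.
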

\begin{proof}
  Let $V = (M,N,\phi,\psi)$ and $V' = (M,N,\phi',\psi')$ and view $V,V'$ as objects of $D^{\lace_\Lambda P}$ via Theorem~\ref{thm:shoelace}. By Theorem~\ref{thm:twist_interleaved_interleavings}, $V$ and $V'$ are $\wtilde\Lambda$-interleaved.
  Note that $\wtilde\Lambda \leq \wtilde\Lambda\wtilde\Lambda = \wbar{\Lambda\Lambda}$ by Lemma~\ref{lem:induced_props}, and thus by Lemma~\ref{lem:worse_interleaving}  
  $V$ and $V'$ are $\wbar{\Lambda\Lambda}$-interleaved.
\end{proof}


\section{Interval-decomposable interleavings and matchings}
\label{sec:diagonal}

In the rest of this section, we specialize to the poset $P = (\mathbb{Z},\leq)$ and the target category $D = \vect_K$. Furthermore, we restrict our attention to what we call $\epsilon$-uniform translations ($\Lambda_\epsilon$) with respect to certain height functions. We then study a special class of $\Lambda_\epsilon$-interleavings called the interval-decomposable interleavings and provide a direct relationship with $\epsilon$-matchings. 

First, we provide the following definitions. A \emph{height function} on a poset $P$ is a monotone function $h: P\rightarrow \mathbb{R}$. That is, for $x\leq y$ in $P$, $h(x) \leq h(y)$.
For a translation $\Lambda:P\to P$,
\begin{enumerate}
    \item 
    $\Lambda$ is said to be {\em $\epsilon$-uniform (with respect to $h$)} if $h(\Lambda(x)) - h(x) = \epsilon$ for any $x\in P$.
    \item 
    $\Lambda$ has {\em height $\epsilon$ (with respect to $h$)} if $\sup_{x\in P}(h(\Lambda(x)) - h(x)) = \epsilon$.
  \end{enumerate}
  Note that any $\epsilon$-uniform translation has height $\epsilon$.

Throughout the rest of this work, we fix the following choices of height functions.
We associate to the poset $\mathbb{Z}$ the canonical height  function that is the inclusion $h_\iota: \mathbb{Z} \hookrightarrow \mathbb{R}$. For any translation $\Gamma$ of $\mathbb{Z}$, the associated shoelace $\lace_\Gamma \mathbb{Z}$ is given the canonical height $h$ function defined by $h(x) := h_\iota(x) =x $ and $h(x') := h_\iota(x) = x$ for $x \in \mathbb{Z}$.

We then define the translations $\Lambda_\epsilon$ of $\mathbb{Z}$ for $\epsilon$ any nonnegative integer.
\begin{definition}[The $\epsilon$-uniform translation $\Lambda_\epsilon$ of $\mathbb{Z}$]
For $\epsilon$ any nonnegative integer, there is a unique $\epsilon$-uniform translation $\Lambda_\epsilon: \mathbb{Z}\rightarrow \mathbb{Z}$ given by $x\mapsto x+\epsilon$. Under this translation, we simplify the notation and write $\lace_\epsilon \mathbb{Z} := \lace_{\Lambda_\epsilon}\mathbb{Z}$.
\end{definition}

Next, we see that the uniformness of $\Lambda_\epsilon$ is well-behaved under the induction discussed in Section~\ref{sec:iterated_shoelaces}.
\begin{lemma}
  Let $\epsilon \geq 0$ and recall that $\Lambda_\epsilon:\mathbb{Z}\rightarrow \mathbb{Z}$ is the $\epsilon$-uniform translation on $\mathbb{Z}$.
  If $\Gamma$ is a translation on $\mathbb{Z}$ such that $\Gamma\Lambda_\epsilon = \Lambda_\epsilon \Gamma$, then
  \begin{enumerate}  
  \item the translation $\wbar{\Lambda_\epsilon}$ of $\lace_\Gamma \mathbb{Z}$ is $\epsilon$-uniform, and 
  \item the translation $\wtilde{\Lambda_\epsilon}$ of $\lace_\Gamma \mathbb{Z}$  is $\epsilon$-uniform (if $\Gamma \leq \Lambda_\epsilon$, so that $\wtilde{\Lambda_\epsilon}$ is well-defined).
  \end{enumerate}
  \label{lem:induced_uniformness}
\end{lemma}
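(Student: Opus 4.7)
The plan is to verify both statements by direct computation, unfolding the definitions of the induced translations $\wbar{\Lambda_\epsilon}$ and $\wtilde{\Lambda_\epsilon}$ from Propositions~\ref{defn:induced_translation} and~\ref{defn:twisted_translation}, together with the height function $h$ on $\lace_\Gamma \mathbb{Z}$ that satisfies $h(x) = h(x') = x$ for $x \in \mathbb{Z}$. Because any object of $\lace_\Gamma \mathbb{Z}$ is either of the form $i$ or $i'$ for some $i \in \mathbb{Z}$, each statement reduces to a pair of cases.

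For statement (1), I would recall that by definition $\wbar{\Lambda_\epsilon}(i) = \Lambda_\epsilon(i) = i + \epsilon$ and $\wbar{\Lambda_\epsilon}(i') = (\Lambda_\epsilon(i))' = (i+\epsilon)'$. In either case, applying $h$ gives $h(\wbar{\Lambda_\epsilon}(i)) - h(i) = (i+\epsilon) - i = \epsilon$ and $h(\wbar{\Lambda_\epsilon}(i')) - h(i') = (i+\epsilon) - i = \epsilon$, so $\wbar{\Lambda_\epsilon}$ is $\epsilon$-uniform with respect to $h$.

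For statement (2), the only subtlety is that $\wtilde{\Lambda_\epsilon}$ swaps primed and unprimed copies: $\wtilde{\Lambda_\epsilon}(i) = (\Lambda_\epsilon(i))' = (i+\epsilon)'$ and $\wtilde{\Lambda_\epsilon}(i') = \Lambda_\epsilon(i) = i+\epsilon$. However, since $h$ is defined to be insensitive to the prime (i.e.\ $h(x) = h(x') = x$), this swap does not affect the height computation. Thus $h(\wtilde{\Lambda_\epsilon}(i)) - h(i) = (i+\epsilon) - i = \epsilon$ and $h(\wtilde{\Lambda_\epsilon}(i')) - h(i') = (i+\epsilon) - i = \epsilon$, giving $\epsilon$-uniformness. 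The hypothesis $\Gamma \leq \Lambda_\epsilon$ enters only to ensure via Proposition~\ref{defn:twisted_translation} that $\wtilde{\Lambda_\epsilon}$ is a well-defined translation in the first place, not for the uniformness computation itself.

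There is no real obstacle here: the content of the lemma is that the height function on $\lace_\Gamma \mathbb{Z}$ was deliberately chosen to forget the prime, and the induced/twisted translations were defined so that their action on underlying $\mathbb{Z}$-coordinates is just $\Lambda_\epsilon$. I would therefore present the proof compactly as the two short case checks above, perhaps collapsing them into a single display that handles the primed and unprimed elements in parallel.
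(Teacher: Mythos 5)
Your proof is correct and takes the same approach as the paper, which simply states that the result follows immediately from the definitions; you have just spelled out the two-case verification explicitly.
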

\begin{proof}
This follows immediately from the definitions.
\end{proof}

Using the above definitions, we are able to state the following Remark~\ref{rem:twist_rephrase}, which translates Theorem~\ref{thm:twist_interleaved_interleavings} into a statement in terms of ``$\epsilon$-interleavings'' (interleavings with respect to an $\epsilon$-uniform translation). While we do not pursue this connection further, we note that the $\epsilon$-interleavings are used in usual definitions of the interleaving distance \cite{chazal2009proximity,bauer2014induced}.
\begin{remark}
  Still in the case of $P =\mathbb{Z}$, and rephrasing Theorem~\ref{thm:twist_interleaved_interleavings}, we see from Lemma~\ref{lem:induced_uniformness} that every pair of interleavings with respect to an $\epsilon$-uniform translation (which can only be $\Lambda_\epsilon$) are themselves interleaved by an $\epsilon$-uniform translation (given by $\wtilde{\Lambda_\epsilon}$).
  \label{rem:twist_rephrase}
\end{remark}

Next, we turn our attention to $\epsilon$-matchings and their relationship to what we call interval-decomposable interleavings. We recall the following definitions.
\begin{definition}
  Let $P$ be a poset and $S$ a subposet of $P$.
  \begin{enumerate}
  \item 
    A subposet $S$ is said to be {\em connected} if $S=S_1 \bigsqcup S_2$ such that $s_1$ and $s_2$ are not comparable for any $s_1\in S_1$ and any $s_2 \in S_2$ implies $S_1 = \emptyset$ or $S_2 =\emptyset$. 
  \item
    A subposet $S$ is said to be {\em convex} if for any $s,t\in S$ with $s\leq t$, the segment in $P$ 
    \[
      [s,t] = \{x \in P \suchthat s\leq x \leq t\}
    \]
    is a subposet of $S$.
  \item
    A subposet $S$ is called {\em an interval} if it is connected and convex.
\end{enumerate}
\end{definition}

\begin{definition}
  A representation $M$ of a poset $P$ is called {\em an interval} if:
  \begin{enumerate}
      \item it is thin, namely, $\dim M(x) \leq 1$ for all $x \in P$,
      \item its support subposet $\textrm{supp}(M) = \{x\in P\suchthat M(x)\neq 0\}$ is an interval, and
      \item $M(x\leq y)=1$ for any comparable pair of $x,y $ in the support subposet $S$.
  \end{enumerate}
A representation $M$ of $P$ is said to be {\em interval-decomposable} if it is isomorphic to a direct sum of interval representations.
\end{definition}

\begin{remark}
  \leavevmode
  \begin{enumerate}
  \item Note that the endomorphism algebra $\End_{(\vect_K)^P} (M)$ of an interval representation $M$ is just $K$, and hence any interval representation is indecomposable.    
    Moreover, by the Krull-Schmidt-Remak-Azumaya theorem, 
    any interval-decomposable representation has indecomposable decomposition unique up to isomorphism and permutation of terms.
  \item Crawley-Boevey proved that any pointwise finite persistence module is interval-decomposable for $P=\mathbb{R}$ \cite{MR3323327}. Thus, an analogous statement is true for $P=\mathbb{Z}$. Note that any interval representation of $P=\mathbb{Z}$ is in one of the following forms:    
    \[
      \begin{tikzcd}[row sep=0pt, column sep=1.5em]
          &  & & \scriptstyle{\text{at }x} &  & \scriptstyle{\text{at }y}  &  & \\[-5pt]
        I[x,y]: & \cdots \rar & 0 \rar & K \rar & \cdots \rar & K \rar & 0 \rar & \cdots, \\
        I(-\infty, y]: & \cdots \rar & K \rar & K \rar & K \rar & K \rar & 0 \rar & \cdots, \\ 
        I[x,\infty):& \cdots \rar & 0 \rar & K \rar & K \rar & K \rar & K \rar & \cdots, \\ 
        I(-\infty,\infty):& \cdots \rar & K \rar & K \rar & K \rar & K \rar & K \rar & \cdots.
      \end{tikzcd}
    \]
  \end{enumerate}
\end{remark}

For a persistence module $M$, we denote by $B(M)$ its \emph{barcode}. Namely, $B(M)$ is the multiset of the (isomorphism classes of) interval direct summands $I[x,y],I(-\infty,y],I[x,\infty), I(-\infty,\infty)$ in an indecomposable decomposition of $M$.
In order to simplify the statements of the following definitions and results, we adopt the following convention:
for any integer $x$,
\[
  \begin{array}{rcl}
    |\pm \infty - (\mp \infty)| &=& \infty,\\
    |\pm \infty - (\pm \infty)| &=& 0,\\
    |\pm \infty -x| &=& \infty, \text{ and }\\
    |x-(\pm \infty)| &=& \infty.
  \end{array}
\]

Let us recall the definition of an $\epsilon$-matching \cite{bauer2014induced}.
\begin{definition}[$\epsilon$-matching]
  Let $M,N$ be persistence modules.
  An $\epsilon$-matching $\sigma:B(M)\to B(N)$
  is a matching $\sigma$ (namely, it gives a bijection between submultisets) such that  
  \begin{enumerate}
  \item if $\sigma(I)=J$ with $I=I[x,y], J=I[s,t]$, then  
    $|x-s|\leq \epsilon,\ |y-t|\leq \epsilon$;
    and 
  \item if $I=I[x,y]$ is unmatched, then $|x-y|<2\epsilon$.
  \end{enumerate}
  \label{def:ematching}
\end{definition}
To see how our convention for infinite intervals interacts with the definition, we see for example that
$
  \sigma([1,\infty)) = [0,\infty)
$
is valid for a $1$-matching, since $|1-0| \leq 1$ and $|\infty - \infty| = 0 \leq 1$. On the other hand,
$
  \sigma([1, 10000)) = [0,\infty)
$
is not valid for a $1$-matching, since $|10000-\infty| = \infty \not\leq 1$. Generally, an infinite interval can only be matched to another infinite interval of the same ``type'', and infinite intervals cannot be unmatched.

Condition (2) of Definition~\ref{def:ematching} states that unmatched intervals must be short. However, there is no restriction in general that short intevals always be unmatched. This presents some technical problems for our main Theorem~\ref{thm:corr}, and so we add the  following condition on $\epsilon$-matchings to state our correspondence between $\epsilon$-matchings and interval-decomposable interleavings. Essentially, we restrict what sort of short intervals can be matched by $\sigma$.

\begin{definition}[essential $\epsilon$-matching]
  Let $M,N$ be persistence modules.
  A $\epsilon$-matching $\sigma : B(M)\to B(N)$ is said to be {\em essential} 
  if for every pair of intervals $I[x,y]$, $I[s,t]$ with $|y-x|< 2\epsilon$, $|t-s|<2 \epsilon$ such that  $\sigma(I[x,y]) = I[s,t]$, the following Condition \eqref{eq:conast} holds
  \begin{equation} \label{eq:conast}    
    s-\epsilon \leq x \leq t-\epsilon \leq y \text{ or } x-\epsilon \leq s \leq y-\epsilon \leq t.
  \end{equation}
  \label{def:essential}
\end{definition}

Let us rephrase Condition~\eqref{eq:conast} for matched short intervals in more algebraic terms.
\begin{lemma} \label{lem:ess}
  Let $I[x,y]$, $I[s,t]$ be short intervals ($|y-x|< 2\epsilon$, $|t-s|<2\epsilon$) with $|x-s|\leq \epsilon,\ |y-t|\leq \epsilon$. Then, the following are equivalent.
  \begin{enumerate}
  \item Condition~\eqref{eq:conast} holds: $s-\epsilon \leq x \leq t-\epsilon \leq y \text{ or } x-\epsilon \leq s \leq y-\epsilon \leq t$.
  \item $\Hom (I[x,y],I[s,t]\Lambda_\epsilon)\not=0 \text{ or }\Hom (I[s,t],I[x,y]\Lambda_\epsilon)\not=0$.
  \item There exists a nontrivial $\Lambda_\epsilon$-interleaving between $I[x,y]$ and $I[s,t]$.
  \end{enumerate}  
\end{lemma}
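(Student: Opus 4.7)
The plan is to reduce the equivalences to a standard $\Hom$ calculation for interval representations of $\mathbb{Z}$, and then use the shortness hypothesis to collapse the interleaving conditions. The two preliminary facts I need are the shift identity $I[s,t]\Lambda_\epsilon = I[s-\epsilon,\,t-\epsilon]$ (immediate from $(M\Lambda_\epsilon)(p) = M(p+\epsilon)$), and the classical Hom computation
\[
  \dimhom{I[a,b]}{I[c,d]} = \begin{cases} 1 & \text{if } c \leq a \leq d \leq b,\\ 0 & \text{otherwise}.\end{cases}
\]
The vanishing direction uses naturality: any point of $[a,b]$ strictly below $[c,d]$ (i.e.\ $a<c$), or of $[c,d]$ strictly above $[a,b]$ (i.e.\ $b<d$), forces the morphism to vanish on the intersection; the nonvanishing direction is realized by the constant scalar map on $[a,b]\cap[c,d]$.

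Combining these, $\Hom(I[x,y], I[s,t]\Lambda_\epsilon) \neq 0$ becomes $s-\epsilon \leq x \leq t-\epsilon \leq y$ and $\Hom(I[s,t], I[x,y]\Lambda_\epsilon) \neq 0$ becomes $x-\epsilon \leq s \leq y-\epsilon \leq t$, so (2) is literally Condition~\eqref{eq:conast}, giving (1)$\iff$(2) with no further work.

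For (2)$\Rightarrow$(3), the conceptual key is that $|y-x|<2\epsilon$ forces $I[x,y]\eta_{\Lambda_\epsilon\Lambda_\epsilon}=0$, since no integer $p$ can satisfy both $p,\,p+2\epsilon \in [x,y]$; similarly $|t-s|<2\epsilon$ gives $I[s,t]\eta_{\Lambda_\epsilon\Lambda_\epsilon}=0$. Given any nonzero morphism supplied by (2), I will pair it with the zero morphism in the opposite direction; both triangles of Diagram~\eqref{eq:interleaving_morphisms} then collapse to $0=0$, producing a nontrivial $\Lambda_\epsilon$-interleaving. The converse (3)$\Rightarrow$(2) is immediate, since a nontrivial interleaving by definition contains at least one nonzero morphism, which lies in one of the two Hom spaces of (2). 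The only genuinely nontrivial step is the Hom computation; the rest is bookkeeping, with the heart of the matter being that shortness of both intervals kills the double translation, enabling us to freely pair any nonzero morphism with zero.
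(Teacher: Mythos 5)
Your proof is correct and follows essentially the same route as the paper: the equivalence (1)$\iff$(2) is the standard interval Hom computation applied to $I[s,t]\Lambda_\epsilon = I[s-\epsilon,t-\epsilon]$, and (2)$\Rightarrow$(3) uses that shortness kills $I\eta_{\Lambda_\epsilon\Lambda_\epsilon}$ so the interleaving triangles become $0=0$. One small difference: you pair the nonzero morphism with the \emph{zero} morphism in the opposite direction, whereas the paper writes down the natural candidate pair $(f,g)$ explicitly (with $f(a)=\id$ on $x\le a\le t-\epsilon$, $g(a)=\id$ on $s\le a\le y-\epsilon$, at least one nonzero). Your choice is marginally cleaner for proving this lemma, but the paper's explicit pair $(f,g)$ is reused verbatim in the proof of Lemma~\ref{lem:corrif} to build the interval representation $L_{I,J}$, so that construction would have to be redone if one went with the ``pair with zero'' shortcut.
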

By `nontrivial interleaving' we mean an interleaving pair of morphisms $(\phi,\psi)$ such that at least one of $\phi$, $\psi$ is nonzero.
\begin{proof}
First, we note that (1) and (2) are equivalent for intervals in general, and not just for short matched intervals.

(3) $\implies$ (2) This is immediate.

(1,2) $\implies$ (3)
At least one of $f:I[x,y] \to I[s,t]\Lambda_\epsilon$ or 
$g:I[s,t]\to I[x,y]\Lambda_\epsilon$ defined by 
\[
  f(a)=
  \left\{
    \begin{array}{cc}
      \id     &  (x\leq a \leq t-\epsilon)\\
      0     & \text{otherwise,}
    \end{array}
  \right.
  \text{ or }
  g(a)=
  \left\{
    \begin{array}{cc}
      \id     &  (s\leq a \leq y-\epsilon)\\
      0     & \text{otherwise,}
    \end{array}
  \right.
\]
is nonzero. 
These morphisms always fit into the commutative diagrams
\[
  \begin{tikzcd}
    I[x,y] \ar[rr]{}{0}\ar[dr,swap]{}{f} & & I[x-2\epsilon,y-2\epsilon]\\
    & I[s-\epsilon, t-\epsilon] \ar[ur,swap]{}{g\Lambda_\epsilon} &
  \end{tikzcd}
\]
and
\[
  \begin{tikzcd}
    & I[x-\epsilon,y-\epsilon]\ar[dr]{}{f\Lambda_\epsilon} & \\
    I[s,t] \ar[rr,swap]{}{0} \ar[ur]{}{g} & & I[s-2\epsilon, t-2\epsilon]
  \end{tikzcd}
\]
where both morphisms $I[x,y] \to I[x-2\epsilon,y-2\epsilon]$ and $I[s,t] \to I[s-2\epsilon, t-2\epsilon]$ are $0$ because both intervals have length $<2\epsilon$.

Thus, $(f,g)$ forms a nontrivial $\Lambda_\epsilon$-interleaving between $I[x,y]$ and $I[s,t]$. 
\end{proof}


Next is the main result of this section.
\begin{theorem} \label{thm:corr}
Let $M,N$ be persistence modules $\mathbb{Z} \to \vect_K$.
There is a bijective corespondence between the collection of essential $\epsilon$-matchings $\sigma : B(M) \to B(N)$ and the set of isoclasses of interval-decomposable representations $L$ of $\lace_\epsilon\mathbb{Z}$ such that $L|_{\text{left}}=M$ and $L|_{\text{right}}=N$.
\end{theorem}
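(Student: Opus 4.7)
The plan is to use Theorem~\ref{thm:shoelace} to identify representations of $\lace_\epsilon \mathbb{Z}$ with $\Lambda_\epsilon$-interleavings $(M,N,\phi,\psi)$, and then to reduce the correspondence to a classification of the intervals of the proset $\lace_\epsilon \mathbb{Z}$. Once such a classification is in hand, both directions of the bijection assemble by decomposing into (or summing over) interval summands, and uniqueness comes from the Krull-Schmidt-Remak-Azumaya theorem.

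First, I would classify the intervals $S$ of $\lace_\epsilon \mathbb{Z}$. Writing $S \cap P$ and $S \cap P'$ for the restrictions to the two copies, which are each intervals (possibly empty) of $\mathbb{Z}$, three types arise: (a) $S \subseteq P$, (b) $S \subseteq P'$, and (c) $S$ meets both copies. In case (c), writing $S \cap P = I[x,y]$ and $S \cap P' = I[s,t]$ (with the obvious modifications for infinite intervals), the convexity of $S$ in $\lace_\epsilon \mathbb{Z}$ forces $|x-s|\leq\epsilon$ and $|y-t|\leq\epsilon$, and connectedness forces $x \leq t-\epsilon$ or $s \leq y-\epsilon$. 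The key observation is that when at least one of $I[x,y], I[s,t]$ is long (length $\geq 2\epsilon$), the two proximity conditions already imply connectedness; while if both are short, the connectedness condition is literally Condition~\eqref{eq:conast} of Definition~\ref{def:essential}. Lemma~\ref{lem:ess} aligns this combinatorial picture with the algebraic one: a type-(c) interval summand of $\lace_\epsilon \mathbb{Z}$ corresponds exactly to a nontrivial $\Lambda_\epsilon$-interleaving between $I[x,y]$ and $I[s,t]$.

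With the classification in place, I define the bijection in both directions. Forward: given an essential matching $\sigma : B(M) \to B(N)$, set $L_\sigma$ to be the direct sum of (i) a type-(c) interval representation for each matched pair $\sigma(I)=J$, with support $I \cup J'$; (ii) a type-(a) interval representation for each unmatched $I\in B(M)$; and (iii) a type-(b) interval representation for each unmatched $J \in B(N)$. The essential condition on $\sigma$ is precisely what guarantees that every would-be type-(c) support really is an interval of $\lace_\epsilon \mathbb{Z}$, so $L_\sigma$ is interval-decomposable, and a direct check using that $\sigma$ is a bijection between submultisets gives $L_\sigma|_P \cong M$ and $L_\sigma|_{P'} \cong N$. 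Backward: given an interval-decomposable $L$ with $L|_P = M$ and $L|_{P'} = N$, decompose $L$ into intervals (unique up to permutation by Krull-Schmidt), and read off $\sigma_L$ by pairing $I \in B(M)$ with $J \in B(N)$ whenever some type-(c) summand has support $I \cup J'$. The classification ensures $\sigma_L$ is automatically an essential $\epsilon$-matching.

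The main obstacle I expect is the first step---the full classification of intervals in $\lace_\epsilon \mathbb{Z}$---and in particular the recognition that, for short-short pairs, the purely order-theoretic connectedness condition coincides exactly with Condition~\eqref{eq:conast}; Lemma~\ref{lem:ess} does most of the heavy lifting here. The remaining work is bookkeeping: handling the four one-sided-infinite interval types under the stated conventions (so that, e.g., an $I(-\infty,y]$ summand of $M$ can only correspond to an $I(-\infty,t]$ summand of $N$ and cannot be unmatched), and verifying that the two constructions are mutually inverse, which follows from Krull-Schmidt together with the bijectivity between interval supports of $\lace_\epsilon \mathbb{Z}$ and the matched/unmatched interval data.
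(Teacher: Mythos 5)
Your proposal is correct and follows essentially the same route as the paper. The paper's Lemmas~\ref{lem:corronlyif} and~\ref{lem:corrif} do precisely the work you describe as ``classifying the intervals of $\lace_\epsilon\mathbb{Z}$'' (the necessary direction via the commutativity/functoriality of an interval representation, which is equivalent to convexity and connectedness of its support; the sufficient direction by explicitly exhibiting the interleaving pair $(f,g)$ from Lemma~\ref{lem:ess} and checking it assembles into a representation), and the paper's maps $F$ and $G$ in the proof of Theorem~\ref{thm:corr} are the sum-over-matched-pairs and read-off-the-decomposition constructions you sketch, with Krull--Schmidt ensuring well-definedness up to isomorphism. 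The one place where your outline is lighter than the paper's argument is the sufficiency check for long intervals: you assert that the two proximity bounds imply connectedness, which is true, but the paper's Lemma~\ref{lem:corrif} also verifies convexity of the putative support (equivalently, that the would-be interval representation actually satisfies the commutativity constraints), which is the case split $|y-x|>2\epsilon$, $|y-x|\ge|t-s|$ in that proof; you'd want to add that check to make the construction of the type-(c) summands airtight.
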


To prove Theorem~\ref{thm:corr} we first show the following Lemmas.

\begin{lemma} \label{lem:corronlyif}
For an interval representation $L$ of $\lace_\epsilon \mathbb{Z}$, $I:=L|_{\text{left}}$ and $J:=L|_{\text{right}}$
are interval representations of $\mathbb{Z}$.
Moreover, suppose that $I = I[x,y]$ and $J =I[s,t]$, both non-empty intervals.
Then we have
\[
  |x-s|\leq \epsilon, \;\;
  |y-t|\leq \epsilon.
\]
If, in addition, $|y-x|< 2\epsilon$ and $|t-s|<2\epsilon$, then Condition \eqref{eq:conast} holds.
\end{lemma}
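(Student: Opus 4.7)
The plan is to exploit the two defining features of interval representations --- thinness with identity structure maps on the support, and a connected-convex support --- applied to $L$ as a representation of the shoelace proset $\lace_\epsilon\mathbb{Z}$, and transport these features back to the two copies of $\mathbb{Z}$ sitting inside it.

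First I verify that the restrictions $I$ and $J$ are themselves interval representations of $\mathbb{Z}$. Thinness and the triviality of the structure maps are inherited directly from $L$, since each inclusion $\mathbb{Z}\hookrightarrow\lace_\epsilon\mathbb{Z}$ is order-preserving. Convexity of $\supp(I) = \supp(L)\cap\mathbb{Z}$ in $\mathbb{Z}$ follows immediately from convexity of $\supp(L)$ in $\lace_\epsilon\mathbb{Z}$ because the shoelace order restricted to a single copy of $\mathbb{Z}$ coincides with the standard order on $\mathbb{Z}$; connectedness in $\mathbb{Z}$ is automatic since $\mathbb{Z}$ is totally ordered. The argument for $J$ is identical.

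For the bounds $|x-s|\leq\epsilon$ and $|y-t|\leq\epsilon$ I use a single ``diamond-filling'' trick that relies on convexity alone. Suppose for contradiction that $s>x+\epsilon$; Definition~\ref{defn:shoelace} then gives $x\leq s'$ in $\lace_\epsilon\mathbb{Z}$, and since both $x$ and $s'$ lie in $\supp(L)$, convexity forces every intermediate element to lie in $\supp(L)$ as well --- in particular $(x+\epsilon)'$, which I check is sandwiched between $x$ and $s'$ in the shoelace order. This puts $x+\epsilon$ in $[s,t]=\supp(J)$, contradicting $s>x+\epsilon$. Symmetric arguments (swapping primed and unprimed copies, and exchanging the pairs $(x,s)$ and $(y,t)$) dispose of the three remaining inequalities. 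The infinite-endpoint cases reduce to the same picture by picking an element of $S_L$ (or $S_R$) below or above any given target value, so no separate treatment is needed.

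For Condition~\eqref{eq:conast} under the short-interval hypothesis I invoke connectedness of $\supp(L)$. Since cross-copy comparabilities in $\lace_\epsilon\mathbb{Z}$ require a genuine $\epsilon$-gap, the decomposition $\supp(L)=S_L\sqcup S_R$ with both pieces nonempty would disconnect $\supp(L)$ unless there exist $a\in S_L$ and $b'\in S_R$ that are comparable in $\lace_\epsilon\mathbb{Z}$. If $a\leq b'$, then $a+\epsilon\leq b\leq t$, so $x\leq a\leq t-\epsilon$; combined with $s-\epsilon\leq x$ and $t-\epsilon\leq y$ from the previous step, this is exactly the first disjunct of \eqref{eq:conast}. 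The opposite comparability $b'\leq a$ yields the second disjunct by a mirror calculation. The main subtlety I anticipate is keeping the bookkeeping clean between primed and unprimed copies across the case splits and orienting the diamond correctly in each direction; but the underlying geometric picture stays uniform throughout.
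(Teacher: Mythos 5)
Your proposal is correct, and it reaches the same conclusions by leaning on a different pair of defining properties of interval representations than the paper does. For the bounds $|x-s|\le\epsilon$, $|y-t|\le\epsilon$, the paper argues by commutativity: it factors a nonzero structure map of $L$ (from a supported source to a supported target) through an intermediate vertex where the value is $0$, obtaining $\mathrm{id}=0$. You instead argue by convexity of $\supp(L)$: under the negation of the bound, you exhibit $x$ and $s'$ (or $s'$ and $x$) as comparable supported elements and show a primed/unprimed element such as $(x+\epsilon)'$ is sandwiched between them, forcing it into the support and contradicting the assumed endpoints. These are really dual readings of the interval-representation definition (identity maps on the support versus the support being an order-convex set), and your version is arguably the cleaner one to write down; it also handles the infinite-endpoint cases uniformly, whereas the paper needs a separate remark. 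The sharper divergence is in the last step: to establish Condition~\eqref{eq:conast} for short matched intervals, the paper invokes indecomposability of $L$ together with Lemma~\ref{lem:ess} (if the condition fails, both cross-morphisms vanish and $L$ splits as $L_{I,0}\oplus L_{0,J}$). You bypass Lemma~\ref{lem:ess} altogether and use connectedness of $\supp(L)$ directly: since both the $P$-side and $P'$-side of the support are nonempty, connectedness forces a cross-copy comparability $a\le b'$ or $b'\le a$, and the shoelace relation $\Lambda a\le b$ (or $\Lambda b\le a$) gives exactly $x\le t-\epsilon$ (or $s\le y-\epsilon$); combined with the already-proved bounds, this is one of the two disjuncts of \eqref{eq:conast}. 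Your route is more self-contained and avoids routing through the Hom-space characterization, at the small cost of restating the connectedness/convexity dichotomy that the representation-theoretic indecomposability argument packages for free.
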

\begin{proof}
  The first statement follows easily from the definitions.
  
  Suppose to the contrary that $|x-s|> \epsilon$.
  Then $x>x-\epsilon>s$ or $x<s-\epsilon<s$. We handle these two cases below and show that they lead to a contradiction. Note that it is only possible to get $|x-s| = \infty> \epsilon$ according to our convention when either $x = -\infty$ or $s=-\infty$ but not both.
  
  \begin{enumerate} 
  \item Suppose that $x>s- \epsilon>s$ and that both $x$ and $s$ are finite. Then $J(s')\to J((x-\epsilon)')\to I(x)$ is non-zero but $J(s') \to I(x-\epsilon)$ and $I(x-\epsilon)\to I(x)$ are zeros 
  since $I(x-\epsilon)=0$ by definition.
  This contradicts the commutativity of the maps of $L$.
  
  In the case that $s = -\infty$, we simply replace $s$ by a sufficiently large negative integer and the above argument works as-is.
  
  \item Let us suppose $x<s-\epsilon<s$ and assume that both $x$ and $s$ are finite. Then, 
  $I(x)\to I(s-\epsilon)\to J(s')$ is non-zero but $I(x) \to J((s-\epsilon)')$ and $J((s-\epsilon)')\to J(s')$ are zeros
  since $J((s-\epsilon)')=0$ by definition.
  By the commutativity in $L$, this is also a contradiction.
  
  In the case that $x=-\infty$, a similar argument as above works.
  \end{enumerate}
  
  Symmetrically, the assumption that $|y-t|>\epsilon$ leads to a contradiction.
  
  Finally, suppose in addition that $|y-x|< 2\epsilon$, $|t-s|<2\epsilon$. If the pair of $I, J$ does not satisfy Condition \eqref{eq:conast}, then $L$ can be decomposed as $L_{I,0} \oplus L_{0,J}$ with $L_{I,0}|_{\text{left}} = I$, $L_{I,0}|_{\text{right}} = 0$ and $L_{0,J}|_{\text{left}} = 0$, $L_{0,J}|_{\text{right}} = J$. This follows from the fact that  all $\epsilon$-interleaving morphisms between $I$ and $J$ are trivial by Lemma~\ref{lem:ess}. This contradicts the indecomposability of $L$, thus showing that Condition~\eqref{eq:conast} must be satisfied.
\end{proof}

\begin{lemma} \label{lem:corrif}
  Let $I=I[x,y]$, $J=I[s,t]$ be $\epsilon$-matched ($|x-s|\leq \epsilon,\ |y-t|\leq \epsilon$). If
  \begin{enumerate}
  \item both are short intervals ($|y-x|< 2\epsilon$ and $|t-s|< 2\epsilon$) and satisfy Condition \eqref{eq:conast}, or
  \item $|y-x| \geq 2\epsilon$ or $|t-s| \geq 2\epsilon$,
  \end{enumerate} 
  then we can construct an interval representation $L$ of $\lace_\epsilon \mathbb{Z}$ such that $L|_{\text{left}}=I$ and $L|_{\text{right}}=J$.
\end{lemma}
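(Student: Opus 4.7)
The plan is to use Theorem~\ref{thm:shoelace} to reduce the construction of $L$ to the construction of a $\Lambda_\epsilon$-interleaving $(I,J,\phi,\psi)$ such that the resulting representation of $\lace_\epsilon \mathbb{Z}$ has interval support with all structure morphisms equal to $\id$. The natural choice is to define both $\phi \colon I \to J\Lambda_\epsilon$ and $\psi \colon J \to I\Lambda_\epsilon$ as identities wherever source and target are both nonzero, and zero otherwise. Explicitly, $\phi(a) = \id$ iff $a \in [\max(x, s-\epsilon), \min(y, t-\epsilon)]$ and $\psi(a) = \id$ iff $a \in [\max(s, x-\epsilon), \min(t, y-\epsilon)]$; the matching hypotheses $|x-s|\leq\epsilon$, $|y-t|\leq\epsilon$ immediately simplify these to $[x, t-\epsilon]$ and $[s, y-\epsilon]$ respectively. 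Naturality of $\phi$ and $\psi$ is then essentially automatic from convexity of the support ranges.

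Next, I would verify the two interleaving commutation conditions. The only nontrivial check is whether $(\psi\Lambda_\epsilon)\phi$ and $I\eta_{\Lambda_\epsilon\Lambda_\epsilon}$ agree; both are zero outside $[x, y-2\epsilon]$ and equal $\id$ inside, the ranges matching again thanks to $|x-s|,|y-t|\leq\epsilon$. The symmetric identity for the $J$-triangle follows by the same argument. This part is pure bookkeeping and should present no real obstacle.

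With the interleaving in hand, Theorem~\ref{thm:shoelace} produces a representation $L$ of $\lace_\epsilon \mathbb{Z}$ with $L|_{\text{left}} = I$, $L|_{\text{right}} = J$, $\supp L = [x,y]\sqcup [s,t]'$, and all structure maps equal to $\id$ on this support (the cross-morphisms composing to $\id\cdot\id = \id$ by the previous paragraph). It remains to show that $\supp L$ is an interval of $\lace_\epsilon\mathbb{Z}$, i.e., thin (clear), convex, and connected. Convexity follows from the matching inequalities: for any $u \leq v \leq w$ in $\lace_\epsilon \mathbb{Z}$ with $u,w \in \supp L$ on opposite copies, the estimates $v + \epsilon \leq w \leq t$ (combined with $t \leq y+\epsilon$) and $v \geq u+\epsilon \geq x+\epsilon \geq s$ place $v$ in the correct interval, and similarly for the other configurations.

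The main obstacle, and the place where the case split enters, is connectivity of $\supp L$. One must exhibit a comparable pair across the two copies. In case (2), if $|y-x|\geq 2\epsilon$, then $x+\epsilon \leq y-\epsilon \leq t$ so $x \leq t'$ in $\lace_\epsilon\mathbb{Z}$, giving the needed bridge (and the symmetric argument handles $|t-s|\geq 2\epsilon$). In case (1), Condition~\eqref{eq:conast} is used directly: the first alternative $s-\epsilon \leq x \leq t-\epsilon$ gives $x+\epsilon \leq t$, hence $x \leq t'$, while the second alternative $x-\epsilon \leq s \leq y - \epsilon$ gives $s+\epsilon \leq y$, hence $s' \leq y$. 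This is precisely the algebraic content of Lemma~\ref{lem:ess}: without \eqref{eq:conast}, any interleaving is trivial and the support decomposes, which is exactly why the condition must appear as a hypothesis here. Combining thinness, convexity, and connectivity yields that $L$ is an interval representation.
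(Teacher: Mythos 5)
Your proof is correct and follows the same approach as the paper: both construct $\phi$ and $\psi$ exactly as the maps $f$ and $g$ in Lemma~\ref{lem:ess} (identity on the overlap $[x,t-\epsilon]$ resp.\ $[s,y-\epsilon]$, zero elsewhere), apply Theorem~\ref{thm:shoelace}, and verify the resulting representation of $\lace_\epsilon\mathbb{Z}$ is an interval. Your write-up is somewhat more explicit about the separate convexity and connectivity checks, which the paper's proof leaves largely implicit by deferring to the proof of Lemma~\ref{lem:ess} and the short remark in case~(2) that $J(z+\epsilon)\neq 0$ whenever $x\leq z\leq z+2\epsilon\leq y$.
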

\begin{proof}
  In the first case, the conclusion follows from the proof of Lemma~\ref{lem:ess} where it can be checked that the quadruple
  $(I,J, f,g)$
  corresponds to an interval representation of $\lace_\epsilon \mathbb{Z}$ via Theorem~\ref{thm:shoelace}.
  
  In the second case, we may assume that $|y-x|>2\epsilon$ and $|y-x|\geq |t-s|$.
  Then,
  at least one of the conditions 
  $x\leq s \leq x+\epsilon \leq y-\epsilon \leq t \leq y$, 
  $s\leq x \leq x+\epsilon \leq y-\epsilon \leq t \leq y$
  or 
  $x\leq s \leq x+\epsilon \leq y-\epsilon \leq y \leq t$ holds. 
  In any case,
  for any $z$ with $x \leq z \leq z+2 \epsilon \leq y$, we have $J(z+\epsilon)\not =0$ since $s\leq z+\epsilon \leq t$.
  Thus,
  it can be cheched that the quadruple $(I,J,f,g)$
  in the proof of Lemma~\ref{lem:ess}
  corresponds to an interval representation of $\lace_\epsilon \mathbb{Z}$ via Theorem~\ref{thm:shoelace} 
  since $I(z) \to J(z+\epsilon)\to I(z+2 \epsilon)=I(z)\to I(z+2 \epsilon)$ for $x\leq z \leq z+2\epsilon\leq y$. 
\end{proof}

\begin{proof}[Proof of Theorem~\ref{thm:corr}]
Let us construct mutually inverse bijective maps  $F$ and $G$ between the collection of essential $\epsilon$-matchings $\sigma:B(M)\to B(N)$ and the set of isoclasses of interval decomposable representations $L$ of $\lace_{\epsilon} \mathbb{Z}$ such that $L|_{left}=M$ and $L|_{right}=N$.
\begin{itemize}
\item Let $\sigma: B(M)\rightarrow B(N)$ be an essential $\epsilon$-matching. We construct the corresponding interval-decomposable interleaving:
  \[
    F(\sigma):=(\bigoplus_{\sigma(I)=J}L_{I,J}) \oplus (\bigoplus_{I \in B(M) \text{, unmatched}} L_{I,0} )\oplus (\bigoplus_{J\in B(N)\text{, unmatched}} L_{0,J} ),
  \]
  where the intervals $L_{I,J}$, $L_{I,0}$, and $L_{0,J}$ are defined as below.

  For each pair $I=I[x,t]$, $J=I[s,t]$ with $\sigma(I)=J$, the hypothesis of Lemma~\ref{lem:corrif} is satisfied since $\sigma$ is essential. Thus, we obtain the interval representation $L_{I,J}$ of $\lace_\epsilon \mathbb{Z}$ such that $L_{I,J}|_{\text{left}}=I$ and $L_{I,J}|_{\text{right}}=J$ by Lemma~\ref{lem:corrif}.

  For each $I=I[x,y] \in B(M)$ unmatched, we construct the interval representation $L_{I,0}$  such that $L_{I,0}|_{\text{left}}=I$ and $L_{I,0}|_{\text{right}}=0$. Note that this is a valid representation, since $|y-x| < 2\epsilon$. We do an analogous construction of $L_{0,J}$ for each $J \in B(N)$ unmatched.

\item In the other direction, let $L$ be an interval-decomposable representation of $\lace_\epsilon \mathbb{Z}$ such that $L|_{left}=M$ and $L|_{right}=N$.
  We define an essential $\epsilon$-matching $G(L):=\sigma_L$ below.

  Without loss of generality, since we are working up to isoclass,
  \[
    L=\bigoplus_{V\text{:interval}} V.
  \]
  For each $V$, an interval direct summand of $L$ appearing in the above decomposition, we define the following.
  We set $I:=V|_{\text{left}}$ and $J:=V|_{\text{right}}$.
  It is obvious that $I\in B(M)$ and $J \in B(N)$.

  If $J=0$, then we define $\sigma_L$ so that $I$ is unmatched.
  Let us check that $I$ has length $<2\epsilon$.
  Indeed, if the length of $I=I[x,y]$ is greater than or equal to $2\epsilon$, then the internal map of $I(x) = V(x) \rightarrow V(x+2\epsilon) = I(x+2\epsilon)$ of $V$is nonzero. However, the maps
  from $V(x)$ to $V((x+\epsilon)')$ and from $V((x+\epsilon)')$ to $V(x+2\epsilon)$ are both zero since $V((x+\epsilon)') = J(x+\epsilon) = 0$. This is contradicts the commutativity requirement imposed on the representation $V$.

  Symmetrically, in the case that $I=0$ we set $J$ unmatched, and it can be checked that $J$ has length $<2\epsilon$.

  Otherwise, if both $I$ and $J$ are nonzero, we define $\sigma_L(I) = J$.
  By Lemma~\ref{lem:corronlyif}, putting $I=I[x,y]$ and $J=I[s,t]$, we have $|x-s|\leq \epsilon$, $|y-t|\leq \epsilon$ and the pair of $I,J$ satisfies the Condition \eqref{eq:conast} if they are both short ($|y-x|< 2\epsilon$ and $|t-s|< 2\epsilon$).

  The above arguments show that $\sigma_L$ is indeed an essential $\epsilon$-matching from $B(M)$ to $B(N)$.
\end{itemize}

By definition, 
the maps $F$ and $G$ are mutually inverse bijective maps, thus the claim follows.
\end{proof}

\begin{remark}
By Lemma~\ref{lem:corrif} and the proofs of Lemma~\ref{lem:corronlyif} and Theorem~\ref{thm:corr}, it turns out that for any (not necessarily essential) $\epsilon$-matching $\sigma$, 
we can define an interval decomposable representation $F'(\sigma)$ of $\lace_{\epsilon} \mathbb{Z}$ as follows:
\begin{equation}
  \label{eq:nonessential_matching}
  \begin{array}{rcl}
    F'(\sigma) &:=&
                    \left(\bigoplus\limits_{\substack{\sigma(I)=J\\ \text{ with Condition~\eqref{eq:conast}}}} L_{I,J}\right) \oplus
    \left(\bigoplus\limits_{\substack{\sigma(I)=J\\ \text{ without Condition~\eqref{eq:conast}}}} (L_{I,0}\oplus L_{0,J})\right) \oplus 
    \\
               && \left(\bigoplus\limits_{I \in B(M) \text{, unmatched}} L_{I,0} \right)\oplus
                  \left(\bigoplus\limits_{J\in B(N)\text{, unmatched}} L_{0,J} \right).
  \end{array}
\end{equation}
However, in general, $G(F'(\sigma))\not = \sigma$ since the $I$ and $J$ appearing as the second term in Equation~\eqref{eq:nonessential_matching}, while matched in $\sigma$, are unmatched in $G(F'(\sigma))$.

Suppose that two persistence modules $M$ and $N$ are $\epsilon$-interleaved. This means that we have an object $(M,N,\phi,\psi)\in \operatorname{Int}_{\epsilon} (\mathbb{Z},\vect_{K})$ which corresponds to a representation $V_{M,N}$ by Theorem~\ref{thm:shoelace}. On the other hand, the algebraic stability theorem \cite{bauer2014induced} implies that there is an $\epsilon$-matching $\sigma$ between $B(M)$ and $B(N)$. From this $\epsilon$-matching, we get the interleaving $F'(\sigma)$ expressed as a representation.  How does this compare to the original interleaving $V_{M,N}$?
Theorem~\ref{thm:twist_interleaved_interleavings} provides an answer: $V_{M,N}$ and $F'(\sigma)$ are in fact $\wtilde{\Lambda_{\epsilon}}$-interleaved.

\end{remark}




\section*{Acknowledgements}
Killian Meehan is supported in part by JST CREST Mathematics (15656429). Michio Yoshiwaki was partially supported by Osaka City University Advanced Mathematical Institute (MEXT Joint Usage/Research Center on Mathematics and Theoretical Physics).
Conflict of Interest: The authors declare that they have no conflicts of interest.
\bibliographystyle{plain}
\bibliography{refs}

\end{document}
